  \pgfplotsset{compat=1.3}
\g@addto@macro\UrlSpecials{\do\!{\discretionary{}{}{}}}\makeatother
\newcommand{\code}{\texttt}
\newcommand{\ExactpAdics}{\texttt{ExactpAdics}}
\newcommand{\ExactpAdicsII}{\texttt{ExactpAdics2}}
\newcommand{\FF}{\mathbb{F}}
\newcommand{\QQ}{\mathbb{Q}}
\newcommand{\RR}{\mathbb{R}}
\newcommand{\ZZ}{\mathbb{Z}}
\newcommand{\cF}{\mathcal{F}}
\newcommand{\cN}{\mathcal{N}}
\newcommand{\cO}{\mathcal{O}}
\newcommand{\fp}{\mathfrak{p}}
\newcommand{\Gal}{\operatorname{Gal}}
\newcommand{\val}{\operatorname{val}}
\newcommand{\xdiv}{\mathbin{\mathrm{div}}\penalty900}
\newcommand{\GL}{\operatorname{GL}}
\newcommand{\Res}{\operatorname{Res}}
\newcommand{\into}{\hookrightarrow}
\newcommand{\abs}[1]{\left\lvert#1\right\rvert}
\def\comment{}
\def\endcomment{}
\long\def\comment#1\endcomment{}
\theoremstyle{plain}
\newtheorem{lemma}{Lemma}[section]
\theoremstyle{definition}
\newtheorem{definition}[lemma]{Definition}
\newtheorem{example*}[lemma]{Example} % \newtheorem establishes the object heading
\newenvironment{example}    % this is the environment name for the input
  {%
   \pushQED{\qed}\begin{example*}}
  {\popQED\end{example*}}
\theoremstyle{remark}
\newtheorem{remark}[lemma]{Remark}
\newcommand{\qedabove}{\vspace*{-1.3\baselineskip}\qedhere}
\newcommand{\qedhigher}{\vspace*{-0.6\baselineskip}\qedhere}
\renewcommand*{\verbatim@font}{\ttfamily\fontseries{m}\selectfont}
\lstdefinelanguage{Magma}{
  % keywords
  morekeywords={end,function,intrinsic,procedure,for,while,repeat,until,do,in,if,else,elif,then,error,assert,require,when,where,is,print,printf,vprint,vprintf,time,declare,verbose,type,attributes,return,continue,break,delete,loop},
  % operators
  morekeywords=[2]{eq,ne,le,lt,ge,gt,cmpeq,cmpne,not,notin,and,or,notsubset,subset,meet,join,diff,sdiff,assigned,eval},
  % constructors
  morekeywords=[3]{sub,ncl,func,proc,ideal,elt},
  % types
  morekeywords=[4]{AnyPadExact,StrPadExact,PadExactElt,FldPadExact,FldPadExactElt,RngUPol_FldPadExact,RngUPolElt_FldPadExact,RngMPol_FldPadExact,RngMPolElt_FldPadExact,SetCart_PadExactElt,Tup_PadExactElt,Val_PadExactElt,Val_FldPadElt,Val_RngUPolElt_FldPad,Val_RngMPolElt_FldPad,RngInt,RngIntElt,SetCart,Tup,List,FldNum,FldNumElt,FldRat,FldRatElt,FldPad,FldPadElt,Getter,BoolElt,SetCart_PadExact,Tup_PadExact},
  sensitive=true,
  morecomment=[l]{//},
  morecomment=[s]{/*}{*/},
  morestring=[b]",
  % literate={->}{\(\rightarrow\) }{1},
}
\def\acknowledge{}
\newcommand{\acknowledgement}[1]{\def\acknowledge{\paragraph{Acknowledgements.} #1}}
\begin{document}

\title{\ExactpAdics: An exact representation \\ of \(p\)-adic numbers}
\author{Christopher Doris \\ University of Bristol \\ \texttt{christopher.doris@bristol.ac.uk}}
\date{April 2017}
\maketitle
\begin{abstract}
We describe two new packages \ExactpAdics{} and \ExactpAdicsII{} for the Magma computer algebra system for working with \(p\)-adic numbers exactly, in the sense that numbers are represented lazily to infinite \(p\)-adic precision. This has the benefits of increasing user-friendliness and speeding up some computations, as well as forcibly producing provable results. The two packages use different methods for lazy evaluation, which we describe and compare in detail. The intention is that this article will be of benefit to anyone wanting to implement similar functionality in other languages.
\end{abstract}
\acknowledgement{This work was partially supported by a grant from GCHQ.}

%!TEX root = paper-exactpadics.tex

\section{Introduction}
\label{xp-sec-intro}

When dealing with completed fields, such as \(\RR\) or \(\QQ_p\), it is generally quite difficult to represent elements exactly. Instead, the commonest way to represent elements is by specifying them to some pre-determined precision, and then performing operations such as arithmetic to this precision also. This is the foundation of \emph{floating point arithmetic}. For example, one might represent the real number \(e\) by its approximation \(2.718281828\) to a precision of 10 real digits. We say such a representation is \emph{inexact} because several real numbers can have the same representation: \(e\), \(2.718281828\) and \(2.7182818281\) all have the same representation to 10 digits precision.

Such a representation is also usually \emph{zealous} meaning that when an operation is performed, such as multiplication, it is immediately computed to the required precision. For instance, computing \(e \times e\) will work to 10 digits precision and actually compute \(2.718281828 \times 2.718281828 = 7.389056096\). In fact, \(e \times e = 7.389056098\ldots\), demonstrating that precision errors can creep into the results, so that they are in fact less precise than the precision claims.

An often-suggested alternative to zealous arithmetic is \emph{lazy arithmetic}, wherein an operation does not produce an answer per-se, but a ``promise to produce an answer to a desired precision''. That is, calling \(e \times e\) would not produce the approximation \(2.718281828\), but would produce a function which, when called with an integer \(k\), returns an approximation to \(e \times e\) to \(k\) digits precision.

Such a function can be said to be an \emph{exact} representation of a real number, because no two distinct real numbers have the same representation: for a sufficiently large \emph{precision} \(k\), the representing functions will return different approximations.

These comments hold true for \(p\)-adic numbers too. For instance, an element of \(\QQ_p\) is generally represented in zealous, inexact arithmetic by its residue class in \(\QQ_p / p^k \ZZ_p\) for some \emph{absolute precision} \(k\): e.g. \(1 + 2^{10} \ZZ_2\) might represent \(1\), \(1 + 2^{10}\) or \(1 + 5 \times 2^{100}\).

There are numerous implementations of such \(p\)-adic arithmetic. FLINT \cite{flint} provides some low-level arithmetic with elements of \(\QQ_p\), univariate polynomials over \(\QQ_p\), and unramified extensions of \(\QQ_p\). Sage \cite{sage} and Magma \cite{magma} have more fully-featured implementations, including arbitrary finite extensions of \(\QQ_p\) and higher-level routines for tasks such as factoring.

Also of note is an implementation in Mathemagix \cite{mathemagix} of the so-called \emph{relaxed \(p\)-adic arithmetic}, which treats elements of \(\QQ_p\) like an infinite sequence of \(p\)-adic coefficients, somewhat like \(\FF_p((t))\), and represents them by a truncated sequence followed by a function to retrieve the next coefficient. This representation is therefore exact, because for different numbers, these streams of digits must eventually diverge. This has specific uses in \(p\)-adic recursion solving, and in principle is useful in general, but is somewhat more complicated to implement than the lazy arithmetic presented in this article, and as such is less fully featured.

A more in-depth description of different \(p\)-adic arithmetic systems is given by Caruso \cite{caruso}.

In this article, we present two new implementations of two different lazy, exact \(p\)-adic arithmetic systems. The implementations are written for the Magma computer algebra system \cite{magma} which, as mentioned above, already has a fully-featured implementation of zealous, inexact \(p\)-adic arithmetic. Our packages, called \ExactpAdics{} and \ExactpAdicsII{}, aim to use the inexact functionality already available as much as possible, in order to provide a more user-friendly wrapper. This allows for rapid addition of new features to the exact arithmetic as soon as they are available inexactly.

To the author's knowledge, these are the first highly-featured, general-purpose implementations of lazy \(p\)-adic arithmetic.

This article describes the rationale and the fundamental concepts behind the packages, but does not constitute a user manual. The user-manuals are available online at \url{https://!cjdoris.github.io/!ExactpAdics} and \url{https://!cjdoris.github.io/!ExactpAdics2}, and the packages may be downloaded from here also.

At the time of writing, we recommend the typical user to use the \ExactpAdicsII{} package (\S\ref{xp-sec-compare-ii}).

As an application, these packages has been used to implement the algorithm in \cite{conductor} to compute the 2-part of the conductor of a hyperelliptic curve of genus 2 defined over a number field. This implementation is available from \url{https://cjdoris.github.io/Genus2Conductor}. It uses such high-level \(p\)-adic routines as: computing the completion of a number field at a finite place (\S\ref{xp-sec-completions}); computing the factorization of a univariate polynomial (\S\ref{xp-sec-factorization}) and the fields defined by its factors; and Hensel-lifting roots of a system of multivariate equations (\S\ref{xp-sec-hensel-multiroot}).

As another application, these packages can optionally be used with the implementation of the algorithms described in \cite{galoisgroups} for computing the Galois group of a \(p\)-adic polynomial. This is available from \url{https://cjdoris.github.io/pAdicGaloisGroup}. With either package present the Galois group algorithm becomes provably correct, whereas otherwise with inexact \(p\)-adics there is no such guarantee. We also find that the algorithms run faster with exact \(p\)-adics, at least for reasonably high-degree inputs.

\acknowledge{}

\subsection{Terminology}
\label{xp-sec-terminology}

Suppose \(K\) is a \(p\)-adic field (a finite extension of \(\QQ_p\)), with ring of integers \(\cO=\cO_K\) and uniformizing element \(\pi=\pi_K\). The \(\pi\)-adic valuation is denoted \(\val=\val_K\) such that \(\val(\pi)=1\).

When we refer to an \emph{inexact} (representation of a) \(p\)-adic number \(x \in K\), we mean a conjugacy class \(x + \pi^k \cO\). We refer to \(k\) as the \emph{absolute precision} of (the representation of) the number.

Equivalently, it may be represented as \(\pi^v (y + \pi^r \cO)\) where \(y \in \cO\) and \(r \geq 0\). We refer to \(v\) as the \emph{weak valuation} of \(x\); it is a lower bound on the true valuation of \(x\). We refer to \(r\) as the \emph{relative precision}; it bounds the number of non-zero \(\pi\)-adic digits of \(x\) known. Note that \(v+r=k\).

We say that \(x\) is \emph{weakly zero} if \(y \in \pi^r \cO\), that is if the representation is of the form \(\pi^{v+r} \cO\). Note:
\begin{itemize}[noitemsep]
\item If \(x\) is not weakly zero, then it is not zero.
\item If \(r=0\) then \(x\) is weakly zero.
\end{itemize}

We typically enforce the following \emph{normalizing condition}: if \(r > 0\) then \(y \in \cO^\times\). Now note:
\begin{itemize}[noitemsep]
\item If \(x\) is not weakly zero, then its valuation is exactly \(v\).
\item \(x\) is weakly zero if and only if \(r=0\) (and if and only if \(k=v\)).
\end{itemize}

Magma's builtin \(p\)-adics (\verb|FldPad|, \verb|FldPadExact|, etc.) are inexact in this sense, and satisfy the normalizing condition. We note that \emph{prime} \(p\)-adic fields --- i.e. \(\QQ_p\) --- as opposed to their elements, can themselves naturally be represented exactly by the prime itself. Extensions of the form \(K(x)/(f(x))\) are usually represented inexactly via an inexact representation of the polynomial \(f(x) \in K[x]\); however we note that Magma does additionally have a builtin exact representation of extensions, represented by a map \(m : \ZZ \to K[x]\) such that \(m(k)\) is a defining polynomial to precision \(k\). We refer to this latter representation as \emph{semi-exact}, since the field is represented exactly but its elements are represented inexactly.

The residue class field \(\cO/\pi\cO\) is denoted \(\FF=\FF_K\), and \(\bar{x} \in \FF\) denotes the residue class of \(x \in \cO\).

A polynomial \(f(x) = \sum_{i=0}^d f_i x^i \in K[x]\) of degree \(d\) is \emph{Eisenstein} if \(\val(f_0)=1\), \(\val(f_i) \geq 1\) for \(1 \leq i < d\) and \(\val(f_d)=0\). It is irreducible, its roots have valuation \(\tfrac1d\), and so it defines a totally ramified extension \(K(x)/(f(x))\) of degree \(d\) such that \(x+(f(x))\) is a uniformizer.

A polynomial \(f(x) = \sum_{i=0}^d f_i x^i \in \cO[x]\) of degree \(d\) is \emph{inertial} if \(\val(f_d)=\val(f_0)=0\) and \(\bar{f}(x) = \sum_{i=0}^d \bar{f_i} x^i \in \FF[x]\) is irreducible over the residue class field \(\FF\). It is irreducible, the residue classes of its roots generate an extension of \(\FF\) of degree \(d\), and so it defines an unramified extension \(K(x)/(f(x))\).

\subsection{Comparison of zealous and lazy arithmetic}
\label{xp-sec-compare}

\subsubsection{Precision}

In zealous arithmetic, the user is generally required to choose a precision to work at in advance. Then all computations are performed to that precision, and it may happen that the precision chosen was not sufficient. In this case, the user will probably start the computation over with a higher precision. This process of manually increasing the precision of a computation can be burdensome for the user. In lazy arithmetic, such precision decisions are made automatically as far as possible.

\begin{example}
Here is a typical interactive Magma session, using its builtin lazy arithmetic:
\begin{lstlisting}
> // try to factorize at precision 10
> K := pAdicField(2, 10);
> R<x> := PolynomialRing(K);
> f := my_favourite_polynomial(R);
> Factorization(f);
error: ...
> // try to factorize at precision 20
> K := pAdicField(2, 20);
> R<x> := PolynomialRing(K);
> f := my_favourite_polynomial(R);
> Factorization(f);
error: ...
> // try to factorize at precision 40
> K := pAdicField(2, 40);
> R<x> := PolynomialRing(K);
> f := my_favourite_polynomial(R);
> Factorization(f);
[ <x^10 + ... >, ... ]
\end{lstlisting}

Using lazy arithmetic provided by our package, the equivalent session would be the following. Note that there is no explicit mention of precision.
\begin{lstlisting}
> K := ExactpAdicField(2);
> R<x> := PolynomialRing(K);
> f := my_favourite_polynomial(R);
> Factorization(f);
[ <x^10 + ... >, ... ]
\end{lstlisting}
\qedabove
\end{example}

In lazy arithmetic, each individual computation is performed to approximately the smallest precision it can be, and so precisions are very ``local'' in the computation. In zealous arithmetic, the precision is generally chosen once at the start of a computation, so each operation is performed to the same precision, and so precisions are more ``global''. If there is a single operation requiring a high ``global'' precision, this increases the precision that all other operations are performed to, which is a performance hit compared to lazy arithmetic.

\begin{example}
An example comes from the conductor algorithm mentioned in the introduction. One portion of this algorithm takes a polynomial \(f(x) \in \QQ_2[x]\), computes its factorization, chooses a factor \(g(x)\), computes the extension \(L/\QQ_2\) defined by \(g\), and then finds a root of \(g\) in \(L\). Usually, the precision required for the factorization far exceeds that of the root-finding; however, because the root-finding is over an extension \(L\), if it were to be done at the same high precision as the factorization, its run-time would often dominate.
\end{example}

\subsubsection{Correctness and provability}

When a \(p\)-adic number \(x \in K\) is represented inexactly as a class \(x + \pi^k \cO\), then it can be ambiguous whether it is really representing \(x\) or the class itself. For many operations, the distinction makes no difference; for example since \[(x+y)+\pi^k \cO = (x+\pi^k \cO) + (y + \pi^k \cO)\] then addition works the same in either interpretation. For other operations, Magma can produce potentially misleading answers; for example if \(x\) is represented as \(0 + \pi^k \cO\) then \verb|Valuation(x)| will return \(k\), when in fact all we really know is that \(\val(x) \geq k\).

\begin{definition}
Suppose \(F\) is a mathematical function and suppose \(\tilde F\) is a programmatic function intended to implement \(F\), so it takes as inputs representations of the inputs of \(F\) and returns as outputs representations of the outputs of \(F\). We say that \emph{\(\tilde F\) represents \(F\)} if for all possible inputs \(X\) to \(F\) and representations \(\tilde X\) of \(X\), that \(\tilde F(\tilde X)\) either does not return successfully or returns a representation of \(F(X)\).
\end{definition}

Hence if \(\tilde F\) represents \(F\), then its outputs depend only on the inputs being represented, and not on the representation of the inputs themselves. In the case of \(p\)-adic computation, this means that the output of \(\tilde F\) should not depend on the precision that its inputs were given to, and therefore is unambiguously a function of the \(p\)-adic value, and not its representation.

As already indicated, the \verb|Valuation| intrinsic in Magma does not represent the valuation function. Also equality is not represented, because it actually is equality of the representation: if \(x=1\) and \(y=1+2^{10}\) are both represented by \(1+2^{10} \ZZ_2\) then \(\verb|x eq y|\) will be true. In fact, it is not possible to determine that two \(p\)-adic numbers are equal when given to any finite precision, and it is only possible to tell that they are unequal if they are given to sufficiently large precision.

As another example, given a polynomial \(f(x)\) represented as \(\tilde f(x) = (1 + \pi^{10} \cO)x^2 + (0 + \pi^{10} \cO)\), the \verb|Roots| intrinsic in Magma will return a double root \(\tilde r = 0 + \pi^{10} \cO\) in \(K\). This is correct as a function of the representations themselves, since \(\tilde f(\tilde r) = 0 + \pi^{10} \cO\) represents 0, but if \(f(x) = x + 2^{11}\) then it is irreducible and therefore has no roots in \(K\). Similarly \verb|Factorization| and \verb|GCD| do not represent factorization and greatest common divisor of \(p\)-adic polynomials.

In our packages, if the name of an intrinsic function is the name of a mathematical function, then the intrinsic represents the function. For example, our \verb|Valuation| intrinsic (see Examples \ref{xp-ex-valuation} and \ref{xp-ex-valuation-ii}) will only return the true valuation of the given number; if the input is weakly zero, it may try to increase its precision, and could potentially do this forever (if the input is 0) or raise a precision error, but it is guaranteed that if it returns, its return value is correct.

In some cases, such as \verb|Roots| (\S\ref{xp-sec-roots}) and \verb|Factorization| (\S\ref{xp-sec-factorization}), the correctness of the output is forced by the fact that the outputs are given exactly. That is, if \verb|Roots| returns a root (exactly), then it by definition comes with a program to compute an approximation to the root to arbitrarily high precision, and therefore assuming the program is correct this is a proof that the root is correct. In the case of \verb|Roots|, it is Hensel's lemma which provides this proof.

The intrinsics which do not represent a function, and therefore depend on the representation, are given names which make this clear. The terms \verb|Weakly| and \verb|Definitely| are used to denote tests which can give false positives or false negatives; for example \verb|IsWeaklyZero| is true if its input appears to be zero up to some precision (but does not guarantee it is zero), and \verb|IsDefinitelyPrimitive| returns true if its input can be proven to be a primitive element (but if it returns false, this does not imply that its input is not primitive). Similarly the term \verb|Weak| denotes non-representing functions, so \verb|WeakValuation| returns the \(k\) in \(0 + \pi^k \cO_K\) and is therefore actually a lower bound on the true valuation; and \verb|WeakDegree| returns an upper bound on the degree of a polynomial, but which may be incorrect if its top coefficient is actually zero.

\subsubsection{Overheads}

The main down-sides of lazy arithmetic are the extra time and memory overheads introduced. In lazy arithmetic, \(p\)-adic values depend on other \(p\)-adic values, and all these dependencies need to be kept in memory for the duration of a computation. Each time an operation is performed, some dependency tracking and propagation needs to occur, which entails some processing time overhead.

This said, we find that these overheads do not usually dominate the run-time of lazy \(p\)-adic arithmetic unless one performs a large number of ordinarily very fast operations, such as basic arithmetic. If this is the case, then one can consider implementing the whole sequence of operations as a new atomic \(p\)-adic operation, which therefore now only contributes a single node to the dependency graph.

\subsection{Structure of this article}

The first three sections describe the \ExactpAdics{} package.

In \S\ref{xp-sec-core} we describe the core data types and functionality provided by the package, including a simplified description of the lazy evaluation of \(p\)-adic numbers.

In \S\ref{xp-sec-getter} we describe the lazy evaluation scheme actually employed by the package, which includes tracking dependencies between different \(p\)-adic values.

In \S\ref{xp-sec-strat} we describe ``precision strategies'', which are a way of programatically avoiding precision errors with minimal input from the user. This is not a core feature, but greatly improves user-friendliness.

\vspace{.5em}\noindent
Next we describe the \ExactpAdicsII{} package and compare.

In \S\ref{xp-sec-core-ii} we describe the core data types and functionality provided by the package, including a description of the lazy evaluation scheme.

In \S\ref{xp-sec-compare-ii} we compare the merits of the approaches taken by the two packages, including timings on some problems of interest.

\vspace{.5em}\noindent
The remaining sections describe additional features which either improve user-friendliness or provide more functionality. These features are mainly present in both packages.

In \S\ref{xp-sec-others} we describe additional structures which are not core functionality, namely multivariate polynomials and tuples.

In \S\ref{xp-sec-val} we describe our representation of valuations (defined in a generic sense) of \(p\)-adic objects, and the operations available on them.

Finally in \S\ref{xp-sec-features} we give an overview of additional features not covered elsewhere. This is largely to demonstrate that these packages are of practical use, since they include features such as root finding, factorization, residue classes and completions. We also provide some implementation notes.

\subsection{Pseudocode}

As the package is written in Magma, we shall use a simplification of the Magma language to demonstrate concepts\footnote{Specifically, we omit \code{;} and \code{end}, and imply code blocks through indentation. We also omit \code{\{documentation\}} blocks from intrinsics and \code{declare}.}. As this article may be useful to implement similar functionality in other languages, we summarise the syntax here.

Every variable has a \emph{type}. For example a ring of integers has type \verb|RngInt|, an integer has type \verb|RngIntElt|, a boolean (true or false) has type \verb|BoolElt| and an inexact \(p\)-adic field has type \verb|FldPad|. New types are defined as
\begin{lstlisting}
type NAME[ELT]: PARENT
\end{lstlisting}
where \verb|NAME| is the name of the type. The part in brackets is optional, but when given the type \verb|ELT| is also declared, and \verb|NAME| is actually a structure with elements of type \verb|ELT|. The part after the colon is optional, but when given the new type is a child of \verb|PARENT| in the type hierarchy, and in particular the new type inherits the attributes from \verb|PARENT|.

A type has attributes, which are named pieces of data attached to instances of the type. Attributes may be attached to a type by
\begin{lstlisting}
attributes TYPE: ATTR1, ATTR2, ...
\end{lstlisting}
where \verb|TYPE| is the name of the type, and \verb|ATTRn| are the names of the attributes.

Instances of the type are created like \verb|x := New(TYPE)|, and its attributes are accessed like \verb|x`ATTR|.

There are three types of functions in Magma: \verb|function|, \verb|procedure| and \verb|intrinsic|, all declared in a similar fashion, such as:
\begin{lstlisting}
function example(x, y : z := 0)
  return x + y + z
\end{lstlisting}

The difference between the three is that a function returns a value, and should not have any side-effects, a procedure does not return a value but can have side-effects (in particular an input may be passed by reference like \verb|~x| and it becomes modifiable), and an intrinsic is a function or procedure which forms the main user-interface. The \verb|z:=0| part is an optional parameter named \verb|z| whose default value is \verb|0|. Furthermore, intrinsics may have type declarations on its inputs and outputs, which allows overloading of intrinsics with the same name but different type signatures. For example:
\begin{lstlisting}
intrinsic ImportantExpression(
    x :: RngIntElt,
    y :: RngIntElt,
    z :: RngIntElt)
      -> FldRatElt
  return (x^2 + y^2) / z^3
\end{lstlisting}
is an intrinsic taking three integers and returning a rational.

Note that any pseudocode in this article is illustrative, and does not necessarily match the code in the implementation. The pseudocode is presented as simply as is possible to get the ideas across, whereas the real code will contain more checks and optimizations.

\section{\ExactpAdics: Core structures and elements}
\label{xp-sec-core}

% \begin{itemize}
% \item Structures: parent \verb|StrPadExact| and children \verb|FldPadExact|, \verb|RngUPol_FldPadExact|.
% \item These are represented by some defining information and an approximation.
% \item Elements: parent \verb|PadExactElt| and children \verb|FldPadExactElt|, \verb|RngUPolElt_FldPadExact|.
% \item These are represented by a parent structure, an approximation and an ``update function''.
% \item The update function is a procedure which takes as an argument an absolute precision and changes the approximation to at least this precision.
% \item Examples on top of this: \verb|IncreaseAbsolutePrecision(x, n)|, \verb|Approximation(x, n)|, \verb|'+'(x, y)|, \verb|'*'(x, y)|.
% \end{itemize}

\subsection{Abstract base types}

In \ExactpAdics{} we have an abstract base type \verb|StrPadExact| representing any kind of exact \(p\)-adic structure or set, and such a set has elements of type \verb|PadExactElt|:
\begin{lstlisting}
type StrPadExact[PadExactElt]
\end{lstlisting}

We shall later have sub-types representing the field of \(p\)-adic numbers (\S\ref{xp-sec-fldpadexact}), rings of polynomials over \(p\)-adic numbers (\S\ref{xp-sec-rngupol}), and more (\S\ref{xp-sec-others}).

Such a structure will always have an \verb|approximation| which is an analogous inexact structure:
\begin{lstlisting}
attributes StrPadExact: approximation
\end{lstlisting}

Elements always have a \verb|parent| structure to which they belong, as well as an \verb|approximation| and an \verb|update| function:
\begin{lstlisting}
attributes PadExactElt: parent, approximation, update
\end{lstlisting}

The \verb|approximation| is an element of the \verb|approximation| of the \verb|parent|, and so provides a finite-precision approximation to the element. The \verb|update| function provides the means to update the approximation arbitrarily precisely, and will be described later in this section and in \S\ref{xp-sec-getter}. Figure \ref{xp-fig-strpadexact} illustrates the relationships between these attributes.

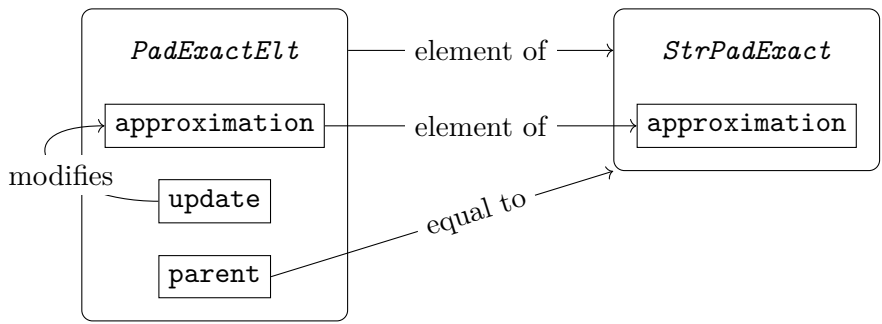
\begin{figure}
\centering
\begin{tikzpicture}[
  type/.style={font=\em\ttfamily},
  attr/.style={draw, font=\ttfamily},
  box/.style={draw, inner sep=0.3cm, rounded corners},
  label/.style={midway, fill=white},
]
\node(Et) at (0,-0) [type] {PadExactElt};
\node(Ea) at (0,-1) [attr] {approximation};
\node(Eu) at (0,-2) [attr] {update};
\node(Ep) at (0,-3) [attr] {parent};
\node(St) at (7,-0) [type] {StrPadExact};
\node(Sa) at (7,-1) [attr] {approximation};
\node(E)[box, fit={(Et) (Ea) (Eu) (Ep)}] {};
\node(S)[box, fit={(St) (Sa)}] {};
\draw[->](St -| E.east)--node[label]{element of}(Et -| S.west);
\draw[->](Ea)--node[label]{element of}(Sa);
\draw[->](Ep.east)--node[label,sloped,pos=0.6]{equal to}(S.south west);
\draw[->](Eu.west) to[out=180,in=180,looseness=3] node[label,pos=0.4]{modifies}(Ea.west);
\end{tikzpicture}
\caption[Illustration of \texttt{StrPadExact} and \texttt{PadExactElt}]{Illustration of the types \texttt{StrPadExact} and \texttt{PadExactElt}, their attributes and the relationships between them.}
\label{xp-fig-strpadexact}
\end{figure}

We provide some universal intrinsics to retrieve the parent of an element, its absolute precision and its weak valuation. The latter two are defined to be the absolute precision and weak valuation of the approximation, and therefore can change over time.
\begin{lstlisting}
intrinsic Parent(x :: PadExactElt) -> StrPadExact
  return x`parent

intrinsic AbsolutePrecision(x :: PadExactElt) -> .
  return AbsolutePrecision(x`approximation)

intrinsic WeakValuation(x :: PadExactElt) -> .
  return WeakValuation(x`approximation)
\end{lstlisting}

First we describe the representations of \(p\)-adic fields and rings of univariate polynomials.

\subsection{\texorpdfstring{\(p\)}{p}-adic fields}
\label{xp-sec-fldpadexact}

An exact \(p\)-adic field is represented by the type \verb|FldPadExact| (compare the name to the inexact \verb|FldPad| type in Magma) which derives from \verb|StrPadExact| (and so inherits its attributes) and has some additional attributes:
\begin{lstlisting}
type FldPadExact[FldPadExactElt]: StrPadExact
attributes FldPadExact: xtype, prime, defining_polynomial
\end{lstlisting}

The \verb|xtype| attribute takes one of the special enumerated values:
\begin{itemize}[noitemsep]
\item \verb|PRIME|: the field is \(\QQ_p\) for some \(p\), and the \verb|prime| attribute is \(p\).
\item \verb|INERT|: the field is an unramified extension of another exact \(p\)-adic field \(K\), and the \verb|defining_polynomial| attribute is an inertial polynomial \(f(x) \in K[x]\), defining the extension as \(K(x)/(f(x))\).
\item \verb|EISEN|: the field is a totally ramified extension of another exact \(p\)-adic field \(K\), and the \verb|defining_polynomial| is an Eisenstein polynomial \(f(x) \in K[x]\), defining the extension as \(K(x)/(f(x))\).
\end{itemize}

The \verb|approximation| field of an exact \(p\)-adic field is a corresponding inexact field, which in Magma has type \verb|FldPad|. For the \verb|PRIME| field \(\QQ_p\), this is simply \verb|pAdicField(p)|.

For extensions (\verb|INERT| or \verb|EISEN|) the situation is a little more complicated. In essence, we want the \verb|approximation| to be the extension defined by an approximation of the \verb|defining_polynomial|. The problem is that later we may want a more precise approximation, and so we have two choices:
\begin{itemize}
\item Replace the \verb|approximation| field with a more precise approximation whenever it is required. This means that any element of the field may have an \verb|approximation| lying in an older \verb|approximation| field, and so will need to be coerced into the latest \verb|approximation| field at some time.
\item Use Magma's built-in semi-exact representation of \(p\)-adic extensions (\S\ref{xp-sec-terminology}): \verb"ext<K | m>" where \verb|m| is a map taking an integer and returning an approximation of the \verb|defining_polynomial| to that precision.
\end{itemize}
We use the second choice because the explicit coercion between different \verb|approximation| fields in the first choice was found to add a performance hit. It also has the benefit that we can talk of \emph{the} \verb|approximation| field, since it does not change in time.

Elements of exact \(p\)-adic fields are represented by the type \verb|FldPadExactElt|. The meanings of its attributes are inherited from its parent type \verb|PadExactElt| but to be explicit:
\begin{itemize}[noitemsep]
\item \verb|parent| is the \verb|FldPadExact| field to which it belongs;
\item \verb|approximation| is an element of the \verb|approximation| field of its \verb|parent|, and is therefore a \verb|FldPadElt|;
\item \verb|update| is its update function, used to update the \verb|approximation|.
\end{itemize}

We define coercion so that \verb"K ! <init, mkupdate>" creates an element of \verb|K| whose initial approximation is \verb|init| and whose update function is \verb|mkupdate(x)| where \verb|x| is the element being created, thus allowing the update function to refer to \verb|x| itself:
\begin{lstlisting}
intrinsic IsCoercible(K :: FldPadExact, args :: Tup)
    -> FldPadExactElt
  x := New(FldPadExactElt)
  x`parent := K
  x`init := args[1]
  x`update := args[2](x)
  return true, x
\end{lstlisting}

We provide intrinsics to access basic information; intrinsics for inertia degree and ramification degree are defined similarly:
\begin{lstlisting}
intrinsic IsPrimeField(K :: FldPadExact) -> BoolElt
  return K`xtype eq PRIME

intrinsic DefiningPolynomial(K :: FldPadExact)
    -> RngUPolElt_FldPadExact
  if IsPrimeField(K) then
    error "not an extension"
  else
    return K`defining_polynomial

intrinsic BaseField(K :: FldPadExact) -> FldPadExact
  return BaseRing(DefiningPolynomial(K))

intrinsic Degree(K :: FldPadExact) -> RngIntElt
  return Degree(DefiningPolynomial(K))

intrinsic AbsoluteDegree(K :: FldPadExact) -> RngIntElt
  if IsPrimeField(K) then
    return 1
  else
    return Degree(K) * AbsoluteDegree(BaseField(K))
\end{lstlisting}

\subsection{Univariate polynomials}
\label{xp-sec-rngupol}

A univariate polynomial ring over a \(p\)-adic field is represented by the type \verb|RngUPol_FldPadExact| (analogous to the inexact type \verb|RngUPolElt[FldPad]| in Magma) which also derives from \verb|StrPadExact|:
\begin{lstlisting}
type RngUPol_FldPadExact[RngUPolElt_FldPadExact]
attributes RngUPol_FldPadExact: base_ring
\end{lstlisting}

Such a ring is defined by its \verb|base_ring|, an exact \(p\)-adic field (i.e. of type \verb|FldPadExact|).

The \verb|approximation| of such a ring must be the univariate \verb|PolynomialRing| of the \verb|approximation| of the \verb|base_ring| (i.e. of type \verb|RngUPol[FldPad]|).

\subsection{The update function}
\label{xp-sec-update}

The \verb|update| attribute of a \verb|PadExactElt| is a means to increase the precision of its \verb|approximation| to a given absolute precision. Therefore it is natural to define it as a procedure which takes as input an absolute precision \(k\), and whose side-effect is to replace the \verb|approximation| by one whose precision is at least \(k\). Using this definition will result in a working implementation of exact \(p\)-adics, but as we shall see in \S\ref{xp-sec-getter} it has some drawbacks and so in reality we use a slightly different definition. For now, however, it suffices to think of the update function in this way.

In the update function, instead of modifying the \verb|approximation| of an element directly, one should use the following intrinsic which first checks that the update is consistent with the pre-existing approximation and in reality may perform more checks:

\begin{lstlisting}
intrinsic Update(x :: FldPadExactElt, xx :: FldPadElt)
  assert IsWeaklyEqual(x`approximation, xx)
  x`approximation := xx
\end{lstlisting}

Instead of calling the \verb|update| function directly, we should use the following intrinsic which ensures it is only called when required. In fact, since the \verb|update| function is not a function at all then this intrinsic will actually have a different definition (\S\ref{xp-sec-update2}), but with the same effect.

\begin{lstlisting}
intrinsic IncreaseAbsolutePrecision(x :: PadExactElt, n)
  if not AbsolutePrecision(x) ge n then
    x`update(n)
\end{lstlisting}

In practice, we don't usually just want to increase the precision of an element, but we want the approximation itself. Hence we also make available an intrinsic \verb|Approximation| to retrieve an approximation to an element to a certain absolute precision. It simply has to increase the absolute precision of the element, then return its approximation, perhaps with its precision decreased to the desired value.

\begin{lstlisting}
intrinsic Approximation(x :: FldPadExactElt, n)
  IncreaseAbsolutePrecision(x, n)
  return ChangeAbsolutePrecision(x`approximation, n)
\end{lstlisting}

To increase the precision of an extension field, we just need to increase the precision of its \verb|defining_polynomial| correspondingly. This ensures that the next time the semi-exact \verb|approximation| field retrieves a defining polynomial, it will already be available to the given precision. There is nothing to be done for \verb|PRIME| fields, since the prime is already represented exactly.

\begin{lstlisting}
intrinsic IncreasePrecision(K :: FldPadExact, n)
  if not IsPrimeField(K) then
    IncreaseAbsolutePrecision(K`defining_polynomial, n)
\end{lstlisting}

The precision of a polynomial ring is the precision of its base ring, so to increase one we just have to increase the other:

\begin{lstlisting}
intrinsic IncreasePrecision(R :: RngUPol_FldPadExact, n)
  IncreasePrecision(R`base_ring, n)
\end{lstlisting}

\subsection{Examples}

\begin{example}
\label{xp-ex-add}
Here is a definition of binary addition on two \(p\)-adic numbers. The initial approximation is simply the sum of the approximations of the inputs. The update function retrieves approximations to the inputs to the required precision, adds them, and sets this as the new approximation for the sum.
\begin{lstlisting}
intrinsic '+' (x :: FldPadExactElt, y :: FldPadExactElt)
    -> FldPadExactElt
  init := x`approximation + y`approximation
  mkupdate := function (z)
    return procedure (n)
      Update(z, Approximation(x, n) + Approximation(y, n))
  return Parent(x) ! <init, mkupdate>
\end{lstlisting}
This example is an over-simplification compared to the true implementation in the following ways:
\begin{itemize}
\item The update function is not quite as described. See \S\ref{xp-sec-getter}.
\item The initial approximation \verb|init| is computed to the current precision of the inputs, which may be overkill if they are both very precise. Instead, the implementation adds together approximations to ``first precision'', i.e.
\begin{verbatim}
init := ChangeAbsolutePrecision(x`approximation,
  Min(WeakValuation(x)+1, AbsolutePrecision(x)))
  + ChangeAbsolutePrecision(y`approximation,
  Min(WeakValuation(y)+1, AbsolutePrecision(y)))
\end{verbatim}
As an optimization, most functions will compute the initial approximation from the inputs to first precision if possible.
\item It should be checked that the inputs have the same \verb|parent| field, or can be coerced to a common field. \qedhere
\end{itemize}
\end{example}

\begin{example}
Here we give a definition of binary multiplication, which is very similar to addition. The main change is in computing the precision required in the approximations.
\begin{lstlisting}
intrinsic '*' (x :: FldPadExactElt, y :: FldPadExactElt)
    -> FldPadExactElt
  init := x`approximation * y`approximation
  mkupdate := function (z)
    return procedure (n)
      Update(z, Approximation(x, n - WeakValuation(y))
        * Approximation(y, n - WeakValuation(x)))
  return Parent(x) ! <init, mkupdate>;
\end{lstlisting}
\qedabove
\end{example}

\section{\ExactpAdics: Dependency tracking}
\label{xp-sec-getter}

% \begin{itemize}
% \item Motivating example: \(f(x) = x^d + a x^{d-1} + \ldots + a^d\) where \(v(a) = 1\), then evaluate \(f(b)\) where \(v(b)=0\). Causes \(a\) to update \(d\) times. Better would be to look at the whole computation, see that we need \(a\) to many different precisions, and just update it to the greatest.
% \item Problem with using a single ``update function'' is that you can't get this information.
% \item Solution: explicitly separate dependencies from the computation.
% \item The getter type: basically a dependencies function and an evaluate function.
% \item How to resolve dependencies, dependency tree.
% \item Use an ordered id to simplify flattening the tree.
% \end{itemize}

\subsection{Motivation}
\label{xp-sec-getter-motivation}

So far we have described a simple scheme for implementing exact \(p\)-adics, but it has drawbacks.

\begin{example}
\label{xp-ex-getter}
Suppose we are given elements \(a, b \in \QQ_p\), and compute \(c = a^3 + a^2 b + a b^2 + b^3\), and wish to increase the absolute precision of \(c\) to 100.

We therefore require each of the summands \(a^3\), \(a^2 b\), \(a b^2\), \(b^3\) to absolute precision 100. Now suppose that \(\val(a)=10\) and \(\val(b)=0\), so in fact we require the summands to relative precisions 70, 80, 90, 100 respectively. Hence we require \(a\) and \(b\) to these same relative precisions.

Therefore, if we increase the precision of each summand in turn to its required value, then we will be updating \(a\) first to relative precision 70, then 80, then 90 --- i.e. absolute precisions 80, 90 and 100 --- which is \(3\) separate updates. If updating \(a\) is an expensive operation, then this could become a performance issue.

Clearly, the right thing to do in this situation is to observe that we only need to update \(a\) once to absolute precision 100. With the current description of the \verb|update| function, this is not possible.
\end{example}

Our solution is to split updates into two steps: the first step identifies which other updates are required to occur first, we call these \emph{dependencies}; the second step actually performs the update. With this explicit separation, we can find all of the dependencies of a calculation before satisfying any of them, allowing us to remove any redundancy as in the above example.

In the example, \(c\) has 4 dependencies, namely the 4 summands. Each of these summands in turn depends on one or both of \(a\) and \(b\). There is redundancy in these dependencies because \(a\) and \(b\) each appear three times, and therefore could be merged.

\subsection{Getters}

We encapsulate these ideas into a new type\footnote{In the package it is actually called \texttt{ExactpAdics\_Gettr}}:

\begin{lstlisting}
type Getter
attributes Getter: state, get_dependencies, get_value
\end{lstlisting}

A \verb|Getter| represents an exact \(p\)-adic computation with dependencies. The \verb|state| attribute is some getter-specific state which is passed by reference (and hence is modifiable) into the other functions.

The \verb|get_dependencies| attribute is a \verb|procedure(~state, ~deps)| which assigns to \verb|deps| a list of dependencies. A dependency is a pair \verb|<x,n>| where \verb|x| is some \(p\)-adic value (i.e. a \verb|PadExactElt|, such as a \(p\)-adic number or polynomial) and \verb|n| is an absolute precision. Such a dependency should be interpreted as the getter saying ``I can't compute my value until these values are to these absolute precisions.'' We say a dependency is \emph{satisfied} if the absolute precision of \verb|x| is at least \verb|n|.

The \verb|get_value| attribute is a \verb|procedure(~state, ~value)| which, assuming that the dependencies previously reported are all satisfied, either assigns something to \verb|value| or doesn't. If it does, then this is interpreted as the value of the computation. If it doesn't, then this is interpreted as the getter having more dependencies, and so \verb|get_dependencies| needs to be called again.

Evaluating a getter means getting the value from the \verb|get_value| procedure. Of course, this requires satisfying the dependencies reported by \verb|get_dependencies| first, and leads to a recursive dependency satisfaction algorithm which we describe shortly.

\subsection{Update function}
\label{xp-sec-update2}

With getters defined, we may now define precisely what an update function is: it is a function taking as input an absolute precision \verb|n| and returning a \verb|Getter|. This getter, when evaluated, will have the side-effect of increasing the absolute precision of the element to \verb|n|. The value of the getter is ignored.

With this definition, \verb|IncreaseAbsolutePrecision| would actually be defined like so:

\begin{lstlisting}
intrinsic IncreaseAbsolutePrecision(x :: PadExactElt, n)
  if not AbsolutePrecision(x) ge n then
    ignored := Evaluate(x`update(n))
\end{lstlisting}

Now to increase the absolute precision of a value, we just need to know how to evaluate a getter.

\subsection{Evaluating getters}
\label{xp-sec-getter-eval}

To evaluate a getter requires conceptually three steps: first we retrieve its dependencies from \verb|get_dependencies|, then we satisfy those dependencies, then we retrieve the value via \verb|get_value|. If \verb|get_value| did not return a value, then we will need to repeat these steps.

Each of the dependencies is a pair \verb|<x, n>| of a \(p\)-adic element and an absolute precision. Calling \verb|x`update(n)| returns a getter which, on evaluation, increases the absolute precision of \verb|x| to \verb|n|, which we require. Hence we have reduced the problem of evaluating the original getter to the problem of evaluating these dependent getters. Recursing, we traverse the tree of dependencies all the way to its leaves. Figure \ref{xp-fig-deps} illustrates this dependency tree for the motivating example (assuming \(a\) and \(b\) themselves have no dependencies).

\begin{figure}
\centering
\begin{tikzpicture}
\node(c) at (6,0) {\(a^3 + a^2 b + a b^2 + b^3\), 100};
\node(a3) at (1,-1) {\(a^3\), 100};
\node(a2b) at (4,-1) {\(a^2 b\), 100};
\node(ab2) at (8,-1) {\(a b^2\), 100};
\node(b3) at (11,-1) {\(b^3\), 100};
\node(a2) at (3,-2) {\(a^2\), 100};
\node(b) at (5,-2) {\(b\), 80};
\node(a) at (7,-2) {\(a\), 100};
\node(b2) at (9,-2) {\(b^2\), 90};
\node(a_a3) at (1,-2) {\(a\), 80};
\node(b_b3) at (11,-2) {\(b\), 100};
\node(a_a2) at (3,-3) {\(a\), 90};
\node(b_b2) at (9,-3) {\(b\), 90};
\draw(c)--(a3);
\draw(c)--(a2b);
\draw(c)--(ab2);
\draw(c)--(b3);
\draw(a2b)--(a2);
\draw(a2b)--(b);
\draw(ab2)--(a);
\draw(ab2)--(b2);
\draw(a3)--(a_a3);
\draw(a2)--(a_a2);
\draw(b3)--(b_b3);
\draw(b2)--(b_b2);
\end{tikzpicture}
\caption[Tree of dependencies]{The tree of dependencies of the motivating example (Example \ref{xp-ex-getter}).}
\label{xp-fig-deps}
\end{figure}
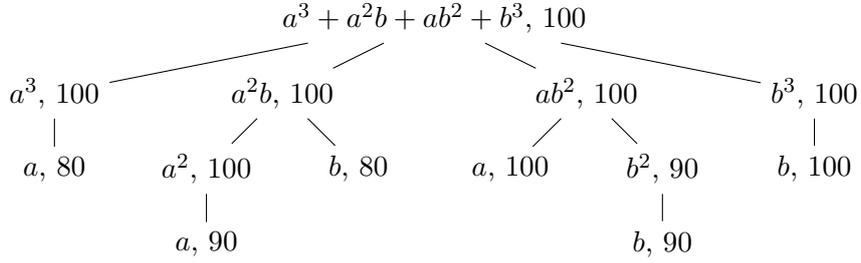

To avoid duplicated work, we want to combine all nodes for the same value together, taking the maximum of their absolute values, resulting in a directed acyclic graph such as in Figure \ref{xp-fig-deps2}.

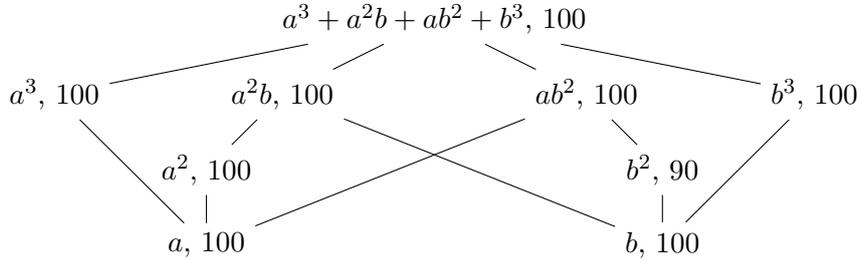
\begin{figure}
\centering
\begin{tikzpicture}
\node(c) at (6,0) {\(a^3 + a^2 b + a b^2 + b^3\), 100};
\node(a3) at (1,-1) {\(a^3\), 100};
\node(a2b) at (4,-1) {\(a^2 b\), 100};
\node(ab2) at (8,-1) {\(a b^2\), 100};
\node(b3) at (11,-1) {\(b^3\), 100};
\node(a2) at (3,-2) {\(a^2\), 100};
\node(b) at (9,-3) {\(b\), 100};
\node(a) at (3,-3) {\(a\), 100};
\node(b2) at (9,-2) {\(b^2\), 90};
\draw(c)--(a3);
\draw(c)--(a2b);
\draw(c)--(ab2);
\draw(c)--(b3);
\draw(a2b)--(a2);
\draw(a2b)--(b);
\draw(ab2)--(a);
\draw(ab2)--(b2);
\draw(a3)--(a);
\draw(a2)--(a);
\draw(b3)--(b);
\draw(b2)--(b);
\end{tikzpicture}
\caption[Merged graph of dependencies]{The merged graph of dependencies of the motivating example (Example \ref{xp-ex-getter}).}
\label{xp-fig-deps2}
\end{figure}

A sink in this graph is precisely a getter with no dependencies. Therefore it may be evaluated and removed from the graph. Repeating, we will eventually reach the source, which can also be evaluated, and we have succeeded.

In practice, we do not represent these dependencies as a tree at all, but we take advantage of a simple fact: if the value \verb|x| was created before value \verb|y|, then \verb|x| cannot possibly depend on \verb|y|. We therefore keep track of the order of creation of elements by giving them a new attribute
\begin{lstlisting}
attributes PadExactElt: id
\end{lstlisting}
to which we assign the value of a global counter when the element is created. We now represent the nodes of the graph simply as an associative array, where the node \verb|<x,n>| is the value at the index \verb|x`id|. Adding a new dependency into the tree is a matter of checking if there is already a dependency for this \verb|x|; if so, then we should combine the old and new absolute precision in the array; if not, then we add the new node into the array. Traversing the tree in dependency order is now a matter of runnning through the indices of the array in sorted order.

We now present a version of this algorithm. The first procedure \verb|add_dependencies| takes a list of dependency pairs \verb|<x,n>| and recursively adds them and all their own dependencies into the array.

\begin{lstlisting}
procedure add_dependencies(~array, todo_list)
  while #todo_list gt 0 do
    // pop an item from the todo list
    x, n := Pop(~todo_list)
    // if this is a new dependency, or the target
    // precision is greater than the existing one,
    // and it is not already satisfied, then replace
    // it and compute more dependencies to put in
    // the todo list
    if (x`id notin array or not n le array[x`id][2])
    and not n le AbsolutePrecision(x)
    then
      getter := x`update(n)
      array[x`id] := <x, n, getter>
      getter`get_dependencies(~getter`state, ~deps)
      for dep in deps do
        Append(~todo_list, dep)
\end{lstlisting}

The procedure \verb|satisfy_dependencies| takes a list of dependency pairs and satisfies them all, first by calling \verb|add_dependencies| to make an array of all dependencies, and then by running through the dependencies in order and trying to satisfy them. Here is one possible implementation:

\begin{lstlisting}
procedure satisfy_dependencies(deps)
  // initially compute all dependencies
  array := AssociativeArray()
  add_dependencies(~array, deps)
  // keep trying until the graph is empty
  while #array gt 0 do
    // traverse the nodes in order
    for i in Sort(Keys(array)) do
      // do the update
      getter := array[i][3]
      getter`get_value(~getter`state, ~value)
      if assigned value then
        // success: remove the entry from the array
        delete array[i]
      else
        // failure: get more dependencies and start over
        getter`get_dependencies(~getter`state, ~deps)
        add_dependencies(~array, deps)
        break
\end{lstlisting}

The implementation in the \ExactpAdics{} package behaves a little differently: if a particular node fails, then instead of immediately jumping back to the bottom of the tree, we continue traversing it to the top, but skipping over nodes which now have unsatisfied depdendencies. This is made possible by changing \verb|add_dependencies| to explicitly track the children and parents of each node in the array, and altering \verb|satisfy_dependencies| to:

\begin{lstlisting}
procedure satisfy_dependencies(deps)
  // initially compute all dependencies
  array := AssociativeArray()
  add_dependencies(~array, deps)
  // keep trying until the graph is empty
  while #array gt 0 do
    // traverse the nodes in order
    for i in Sort(Keys(array)) do
      item := array[i]
      if item has no children then
        // do the update
        getter := item[3]
        getter`get_value(~getter`state, ~value)
        if assigned value then
          // success: remove the entry from the array
          for each parent of item do
            remove item as a child of parent
          delete array[i]
        else
          // failure: get more dependencies
          getter`get_dependencies(~getter`state, ~deps)
          add_dependencies(~array, deps)
\end{lstlisting}

Observe the main differences are that we now need to check a node has no children before processing it; when a node succeeds, we now need to remove it from the list of children of each of its parents; and when it fails, we no longer \verb|break| out from looping over nodes.

Which of these two routines is better is arguable. The former routine may suffer from updating an early element of the graph, and then discovering later that the same element needs to be updated again, whereas the latter avoids this problem by performing as many updates on the graph as possible before starting over again. On the other hand, the latter routine may suffer from traversing the whole graph needlessely if an early node failed and is a child of everything else.

In practice, on problems of interest, we found that the latter routine usually performed better. That is, it was sometimes significantly faster and was rarely significantly slower, which is why the latter is used in the current implementation.

We also now define the intrinsic which evaluates a getter:

\begin{lstlisting}
intrinsic Evaluate(g :: Getter)
  loop
    g`get_dependencies(~g`state, ~deps)
    satisfy_dependencies(~deps)
    g`get_value(~g`state, ~value)
    if assigned value then
      return value
\end{lstlisting}

\subsection{Lazy computations}

Now that we have a way of representing computations with dependencies, we now define some ways of combining and modifying them to produce more complex computations, with dependency tracking still built-in.

For instance, \verb|Compose| takes as input a getter \verb|g| and a function \verb|f|, and returns a getter \verb|h| such that \verb|Evaluate(h)| \verb|=| \verb|f(Evaluate(g))|.

Similarly, \verb|ComposeProcedure| takes as input a getter \verb|g| and a procedure \verb|f|, and returns a getter \verb|h| such that \verb|Evaluate(h)| has the same side-effects as calling \verb|f(Evaluate(g))|.

Similarly, \verb|ComposeGetter| takes as input a getter \verb|g| and a function \verb|f| returning a getter, and returns a getter \verb|h| such that \verb|Evaluate(h)| \verb|=| \verb|Evaluate(f(Evaluate(g))|.

For added convenience, these compose functions can take a sequence of getters instead a single getter. In this case, the arity of the function \verb|f| must equal the length of the sequence. For example \verb|Evaluate(Compose([g1, g2], f))| \verb|=| \verb|Evaluate(f(Evaluate(g1), Evaluate(g2)))|.

The intrinsic \verb|Flatten| takes as input a sequence of getters, and returns the getter whose value is the sequence of values of the input getters.

There are also intrinsics for defining null getters, which do nothing, and for defining getters directly in terms of the \verb|get_value| and \verb|get_dependencies| functions. It is also possible to define getters direcly whose dependencies are themselves getters, instead of \verb|<x,n>| element-precision pairs.

The package also provides ``lazy'' versions of some intrinsics, which by convention are given the suffix \verb|_Lazy|, which returns a getter which when evaluated has the same side-effects and return value as the non-lazy version.

For example the following intrinsic returns a getter \verb|g| such that \verb|Evaluate(g)| has the same side-effects as calling \verb|IncreaseAbsolutePrecision(x, n)| directly:

\begin{lstlisting}
intrinsic IncreaseAbsolutePrecision_Lazy
  (x :: PadExactElt, n) -> Gettr
  if AbsolutePrecision(x) ge n then
    return NullGetter()
  else
    return x`update(n diff AbsolutePrecision(x))
\end{lstlisting}

Indeed, one could now define the non-lazy version as
\begin{lstlisting}
intrinsic IncreaseAbsolutePrecision(x :: PadExactElt, n)
  ignored := Evaluate(IncreaseAbsolutePrecision_Lazy(x,n))
\end{lstlisting}

Similarly, the intrinsic \verb|Approximation_Lazy(x, n)| returns a getter whose value is an approximation of \verb|x| to absolute precision \verb|n|.

\begin{example}
\label{xp-ex-add2}
We can now present an implementation of binary addition using these tools. Compare this with the earlier version (Example \ref{xp-ex-add}), presented in terms of the simplified representation where the update function was simply a procedure; now it is a getter, built using \verb|ComposeProcedure| and \verb|Approximation_Lazy| out of simpler getters.

\begin{lstlisting}
intrinsic '+' (x :: FldPadExactElt, y :: FldPadExactElt)
    -> FldPadExactElt
  init := x`approximation + y`approximation
  mkupdate := function (z)
    return function (n)
      return ComposeProcedure(
        // lazily computes approximations to x and
        // y to precision n
        [ Approximation_Lazy(x, n)
        , Approximation_Lazy(y, n) ],
        // uses the approximations xx and yy to update
        // the value of z
        procedure (xx, yy)
          Update(z, xx + yy)
      )
  return Parent(x) ! <init, mkupdate>
\end{lstlisting}
\qedabove
\end{example}

Most update functions in the package are defined in a similar fashion: firstly they lazily compute approximations to their inputs, and then they use these to update the value. The exceptions to this are mainly when the precision required of the input is not known immediately, and therefore some iteration is required; in such a circumstance, the getter returned by the update function usually needs to be defined directly in terms of its \verb|get_dependencies| and \verb|get_value| procedures.

\section{\ExactpAdics: Precision strategies}
\label{xp-sec-strat}

% \begin{itemize}
% \item Motivating example: computing the valuation of something weakly zero.
% \item Solution: specify the valuations to try in a ``precision strategy'' which is basically a strictly increasing sequence (finite or infinite) of positive integers.
% \item Globally-defined strategies, such as \verb|"default"|.
% \end{itemize}

\subsection{Motivating example}

Suppose we want to compute the valuation of a \(p\)-adic number \(x\). If the number is not currently weakly zero, then this is straighforward: the valuation is the weak valuation. Otherwise, we can't immediately deduce the valuation.

Therefore, we might try increasing the absolute precision of \(x\). If the number is now not weakly zero, then we are done. Otherwise, we might increase the absolute precision further, and repeat this process for some time. At some point, if we don't discover the answer, we might give up.

How do we choose the amount to increase the absolute precision by? How long do we go for before giving up? A simple answer might be to keep doubling the precision forever until we succeed, but this would never terminate if \(x=0\).

On the other hand, the user may want the process to definitely terminate after some amount of effort, and therefore give up after the precision has reached some limit. Or, knowing more about the inputs, it may be more appropriate to increase the precision linearly instead of exponentially, for example. We abstract away such decisions into a precision strategy.

\subsection{Definition}

A \emph{precision strategy} is a strictly increasing sequence of non-negative integers. The sequence may be finite or infinite in length.

\subsection{Representation}

How such a sequence is represented is not too important. In the \ExactpAdics{} package, a precision strategy is represented as one of the following:
\begin{itemize}
\item A single non-negative integer \(n\), which is a strategy of length 1: \((n)\).
\item A list of strategies, which is the contatenation of those strategies.
\item A function \(m\) which takes an integer and either returns true and a larger value, or returns false. It represents a sequence of integers as follows: let \(n_0\) be the previous value in the strategy; for \(i=0,1,\ldots\), if \(m(n_i)\) is false, then terminate the sequence, otherwise it is true and also returns \(n_{i+1}\), the next element of the sequence.
\item A string, which is interpreted as the global strategy with that name (see below).
\item A tuple \verb|<"limit",n>| which limits the remaining strategy to \(n\); that is, it will terminate when the strategy reaches \(n\). More precisely, the first time the strategy outputs a number \(m \geq n\), it instead outputs \(n\) itself and then terminates.
\item A tuple \verb|<"exp",e>| which is equivalent to the function taking \(n\) to \(\lceil n^e \rceil\). It therefore represents an infinite sequence which grows exponentially.
\item A tuple \verb|<"random">| which randomises the remaining strategy as follows: if at time \(i\) the previous value was \(n_i\) and the next value will be \(n_{i+1} > n_i\), then we replace the next value with a uniform random element in \(\{n_i+1,\ldots,n_{i+1}\}\). This aims to dampen any potential issues arising from forcing precisions to come from a small set of values, such as powers of 2.
\end{itemize}

We provide the user with a global array of named strategies, with procedures to define and retrieve strategies with a particular name. Currently we define three named strategies by default:
\begin{itemize}
\item \verb|"defaultLimit"|: \verb|<"limit", 100>|, not a precision strategy in itself, but can be mixed in to other strategies to limit them.
\item \verb|"unlimitedDefault"|: \verb|[1, <"randomize">, <"exp", 2>]|, starts at 1 and keeps doubling forever.
\item \verb|"default"|: \verb|["defaultLimit", "unlimitedDefault"]|, the same as the previous strategy, but with the default limit applied.
\end{itemize}

\subsection{Usage and conventions}

Any function which makes a non-canonical decision about how to control the precision of its inputs should take one or more precision strategies as parameters to make these decision.

By convention, these parameters all have the word \verb|Strategy| in their name, to make their purpose clear. Where there are any precision strategies parameters, there will be one with the name precisely \verb|Strategy|. Its value is used as the default value for the others. Its default value is the string \verb|"default"|, which refers to the global strategy with this name. For example:

\begin{lstlisting}
intrinsic DoSomething( : Strategy   := "default",
                         Strategy1  := Strategy,
                         Strategy2  := Strategy,
                         Strategy2b := Strategy2)
  ...
\end{lstlisting}

This means that for the typical user, it suffices to set a global strategy with the name \verb|"default"| and then forget about precision strategies. If it then turns out that some computations are raising precision errors, the user can consider altering the precision strategy.

Using a precision strategy whose maximum value is 100 is functionally very similar to using inexact \(p\)-adics to precision 100. The difference is that in the inexact case, all computations are done to this precision, whereas in the exact case, 100 is the worst case: if a computation can be done with less precision, then it will be.

\subsection{Baseline precision}

A function which takes a precision strategy parameter is free to use it in any fashion. It is intended, of course, that it will be interpreted as a sequence of precisions to try computations at, but there are different kinds of precision:
\begin{itemize}
\item Absolute precision: the integer \(k\) such that we know the value modulo \(\pi^k\); that is, the approximation is of the form \(x + \pi^k \cO\).
\item Relative precision: the absolute precision minus the weak valuation; that is, the integer \(r\) such that the approximation is of the form \(\pi^v(x + \pi^r \cO)\).
\end{itemize}

If we interpret the entries of a precision strategy as absolute precisions, then it may be that the \(p\)-adic number actually has a large negative valuation, and so computing it to positive absolute precision is overkill. In a sense, the absolute precision is relative to the valuation 0, and 0 is an arbitrary choice.

If we interpret them as relative precisions, then we lose repeatability because the base-line for the relativeness can move: the weak valuation may increase in time. To demonstrate the issue, suppose we are computing the valuation of \(x=0\), initially known to absolute precision 10, and the precision strategy goes up to 100. Interpreting the strategy as relative precisions, we would increase the absolute precision of \(x\) to 110. If we try to compute the valuation again, then we will increase the absolute precision again to 210. There is the potential to keep increasing the absolute precision of \(x\) indefinitely by repeatedly trying to compute its valuation.

We introduce a new kind of precision:
\begin{itemize}
\item Baseline precision: the absolute precision minus the ``baseline valuation''.
\end{itemize}

The baseline valuation is any fixed valuation attached to the value. Hence it may depend on the value, but does not change over time. By default, the baseline valuation is set to the weak valuation of the value when it is initially created. If the baseline valuation is set to 0, then we recover the absolute precision.

If we now interpret entries of the precision strategy as baseline precisions, then we avoid the two problems described above.

\begin{example}
\label{xp-ex-valuation}
Hence, a reasonable implementation of a function to compute the valuation is
\begin{lstlisting}
intrinsic Valuation(x :: FldPadExactElt : Strategy:="default")
  for n in Strategy do
    IncreaseAbsolutePrecision(x, BaselineValuation(x) + n)
    if not IsWeaklyZero(x) then
      return WeakValuation(x)
  error "precision error"
\end{lstlisting}
\qedabove
\end{example}

\section{\ExactpAdicsII: Core structures and elements}
\label{xp-sec-core-ii}

\subsection{Overview}

Recall that in the \ExactpAdics{} package, our updates are performed in terms of absolute precisions: an update function receives an absolute precision, and updates the approximation accordingly. In order to do this, the update function must compute the absolute precisions required of its dependencies, which are fed into the dependency-tracking framework, and recursively the update functions of the dependencies themselves must compute the absolute precisions required of their dependencies.

In the \ExactpAdicsII{} package, we simplify this procedure by introducing a proxy for absolute precision. This proxy is a single positive integer \(n\) which we refer to as the \emph{epoch}. At any given time, a \(p\)-adic object has a \emph{current epoch} meaning that its current approximation is associated to that epoch. The precision of the current approximation must increase with the epoch.

Importantly, \emph{by definition} the approximation of a \(p\)-adic object at epoch \(n\) depends only on the approximations of its dependencies at epoch \(n\). Hence a \(p\)-adic object is represented by essentially two pieces of information: a list of the other \(p\)-adic objects on which it depends; and an \emph{approximation function} which takes as input a list of approximations of its dependencies at some epoch \(n\), and returns an approximation which is taken to be the approximation of the object at the same epoch.

Since the approximation function is only given the approximations of its dependencies at a given epoch \(n\), all it must do is return an approximation to the best precision it can given its inputs. It is not aiming to return an approximation to any specific precision.

For example, our representation of \(\QQ_p\) has no dependencies, and its representation at epoch \(n\) is the fixed-precision field \verb|pAdicField(p,2^n)| whose elements are of the form \(\pi^v (y + \pi^r \ZZ_p)\) for \(r \leq 2^n\). Hence the precision increases exponentially with \(n\), the intention being that one will, in a small number of epochs, be able to increase the precision of a \(p\)-adic object to some desired absolute precision. Since there are only a small number of possible epochs --- the user is highly unlikely to go beyond \(n=20\) --- the dependency-tracking framework should only be invoked relatively infrequently.

As we shall see in \S\ref{xp-sec-update-ii}, the dependency-tracking itself is also quite straightforward.

\subsection{Abstract base types}

As with \ExactpAdics{}, this package uses the abstract types \verb|StrPadExact| and \verb|PadExactElt| to represent \(p\)-adic structures (such a fields and rings) and elements respectively. However, these are now also both subtypes of \verb|AnyPadExact|, which represent any \(p\)-adic object:

\begin{lstlisting}
type AnyPadExact
attributes AnyPadExact: id, dependencies,
  approximations, get_approximation

type StrPadExact: AnyPadExact

type PadExactElt: AnyPadExact
attributes PadExactElt: parent
\end{lstlisting}

The \verb|id| attribute, as before, is a unique integer used to identify the object. It is assigned from a global counter, and so each object can only depend on objects with smaller \verb|id|. This is used to simplify dependency tracking.

The \verb|dependencies| attribute is a list of other \(p\)-adic objects (i.e. of type \verb|AnyPadExact|) on which this one directly depends.

The \verb|approximations| attribute is a list of approximations of the object. The object at position \(n\) in the list is the approximation of the object at epoch \(n\). It is analogous to the \verb|approximation| attribute in the \ExactpAdics{} package, except that we now record all approximations.

The \verb|get_approximation| attribute is the \emph{approximation function}, and is analogous to the \verb|update| function from the \ExactpAdics{} package. It is a function with two inputs: an epoch (a positive integer) and the list of approximations of the \verb|dependencies| at the given epoch. It must return the approximation of the object at the given epoch.

The \verb|parent| attribute of an element (a \verb|PadExactElt|) is the structure (a \verb|StrPadExact|) containing the element. The approximation of an element at epoch \(n\) must be en element of the approximation of the parent at epoch \(n\).

Figure \ref{xp-fig-strpadexact-ii} illustrates the relationships between these types and their attributes.

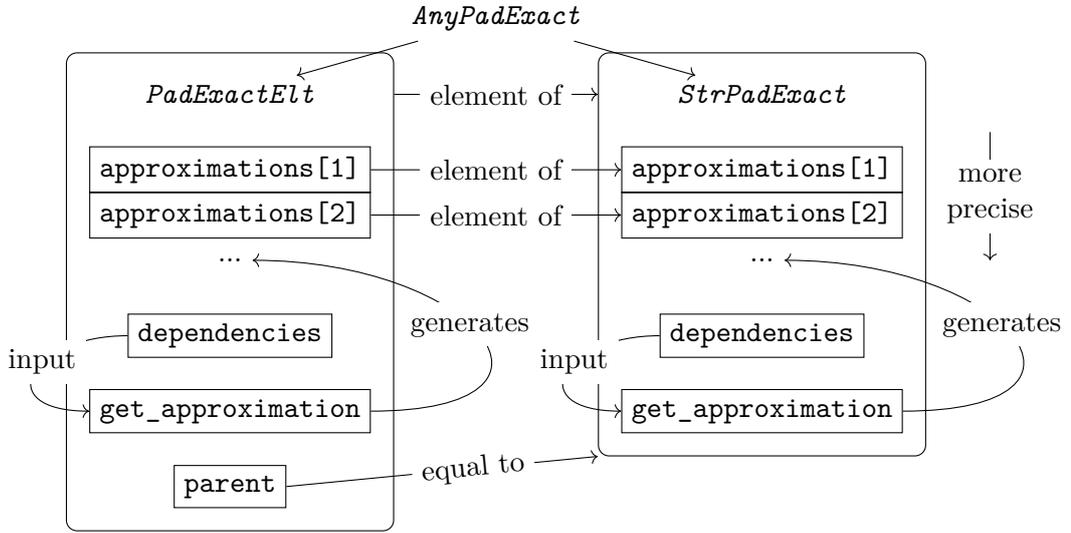
\begin{figure}
\centering
\begin{tikzpicture}[
  type/.style={font=\em\ttfamily},
  attr/.style={draw, font=\ttfamily},
  box/.style={draw, inner sep=0.3cm, rounded corners},
  label/.style={midway, fill=white},
]
\node(At) at (3.5,1) [type] {AnyPadExact};
\node(Et) at (0,-0) [type] {PadExactElt};
\node(Ea1) at (0,-1) [attr] {approximations[1]};
\node(Ea2) at (0,-1.6) [attr] {approximations[2]};
\node(Ea3) at (0,-2.2) [] {...};
\node(Ed) at (0,-3.2) [attr] {dependencies};
\node(Eg) at (0,-4.2) [attr] {get\_approximation};
\node(Ep) at (0,-5.2) [attr] {parent};
\node(St) at (7,-0) [type] {StrPadExact};
\node(Sa1) at (7,-1) [attr] {approximations[1]};
\node(Sa2) at (7,-1.6) [attr] {approximations[2]};
\node(Sa3) at (7,-2.2) [] {...};
\node(Sd) at (7,-3.2) [attr] {dependencies};
\node(Sg) at (7,-4.2) [attr] {get\_approximation};
\node(E)[box, fit={(Et) (Ea1) (Ea2) (Ea3) (Ed) (Eg) (Ep)}] {};
\node(S)[box, fit={(St) (Sa1) (Sa2) (Sa3) (Sd) (Sg)}] {};
\draw[->](At)--(St);
\draw[->](At)--(Et);
\draw[->](St -| E.east)--node[label]{element of}(Et -| S.west);
\draw[->](Ea1)--node[label]{element of}(Sa1);
\draw[->](Ea2)--node[label]{element of}(Sa2);
\draw[->](Ep.east)--node[label,sloped,pos=0.6]{equal to}(S.south west);
\draw[->](Ed.west) to[out=180,in=180,looseness=3] node[label,pos=0.4]{input}(Eg.west);
\draw[->](Eg.east) to[out=0,in=0,looseness=3] node[label,pos=0.55]{generates}(Ea3.east);
\draw[->](Sd.west) to[out=180,in=180,looseness=3] node[label,pos=0.4]{input}(Sg.west);
\draw[->](Sg.east) to[out=0,in=0,looseness=3] node[label,pos=0.55]{generates}(Sa3.east);
\draw[->](10,-0.5)--node[label,align=center]{more\\precise}(10,-2.2);
\end{tikzpicture}
\caption[Illustration of \texttt{AnyPadExact} and subtypes]{Illustration of the types \texttt{AnyPadExact}, \texttt{StrPadExact} and \texttt{PadExactElt}, their attributes and the relationships between them.}
\label{xp-fig-strpadexact-ii}
\end{figure}

\subsection{\texorpdfstring{\(p\)}{p}-adic fields}

The way in which \(p\)-adic fields and their elements are built on top of these base types is identical to the \ExactpAdics{} package:
\begin{lstlisting}
type FldPadExact[FldPadExactElt]: StrPadExact
attributes FldPadExact: xtype, prime, defining_polynomial
\end{lstlisting}
where the \verb|xtype| is either \verb|PRIME| indicating it is a prime \(p\)-adic field \(\QQ_p\), in which case the \verb|prime| attribute must be set to the prime \(p\), or else it is \verb|INERT| or \verb|EISEN| indicating an unramified or totally ramified extension, in which case the \verb|defining_polynomial| attribute must be set to the inert or Eisenstein defining polynomial.

The approximations of a \(p\)-adic field must be fixed-precision inexact \(p\)-adic fields, such as \verb|pAdicField(2,20)| in Magma (representing \(\QQ_2\), whose elements have relative precision at most 20). However, two such fields in Magma are considered to be different, even if they only differ in their precision, and yet we will need to coerce approximate \(p\)-adic numbers between different approximate \(p\)-adic fields representing the same exact field, which will be manual and slow. Hence we define
\begin{lstlisting}
attributes FldPadExact: infinite_precision_approximation
\end{lstlisting}
which is a semi-exact approximation of the field defined via a map, as described in \S\ref{xp-sec-terminology}. The \verb|approximations| are then fixed-precision versions of this infinite-precision field produced via the \verb|ChangePrecision| intrinsic in Magma. Since Magma now understands all of these approximations to come from a common underlying field, it performs coercion between them for free.

\subsection{Univariate polynomials}

The way in which rings of univariate \(p\)-adic polynomials and their elements are defined is again identical to the \ExactpAdics{} package:

\begin{lstlisting}
type RngUPol_FldPadExact[RngUPolElt_FldPadExact]
attributes RngUPol_FldPadExact: base_ring
\end{lstlisting}

It is defined by its \verb|base_ring|, a \(p\)-adic field (a \verb|FldPadExact|). The approximation of such a ring at epoch \(n\) is the univariate polynomial ring over the approximation at epoch \(n\) of the base ring.

\subsection{Examples}

\begin{example}
\label{xp-ex-add-ii}
Here we present an implementation of binary addition of \(p\)-adic numbers (cf. Example \ref{xp-ex-add2}).
\begin{lstlisting}
intrinsic '+' (x :: FldPadExactElt, y :: FldPadExactElt)
    -> FldPadExactElt
  z := New(FldPadExactElt)
  z`parent := x`parent
  z`dependencies := [x, y]
  z`get_approximation := function (n, xds)
    return xds[1] + xds[2]
  return z
\end{lstlisting}
\qedabove{}
\end{example}

In fact, most purely arithmetic functions are this simple to implement.

\begin{example}
Here we present an implementation of polynomial resultant. Since the resultant depends on the degree of the polynomials, we need to ensure that the approximations of the inputs have the correct degree using \verb|EnsureAllApproximationsAreFullDegree|, similar to as in Remark \ref{xp-rmk-ensurenonzero}.
\begin{lstlisting}
intrinsic Resultant (
  f :: RngUPolElt_FldPadExact,
  g :: RngUPolElt_FldPadExact)
    -> RngUPolElt_FldPadExact
  EnsureAllApproximationsAreFullDegree(f)
  EnsureAllApproximationsAreFullDegree(g)
  h := New(RngUPolElt_FldPadExact)
  h`parent := f`parent
  h`dependencies := [f, g]
  h`get_approximation := function (n, xds)
    return Resultant(xds[1], xds[2])
  return h
\end{lstlisting}
\qedabove{}
\end{example}

\subsection{Generating approximations}
\label{xp-sec-update-ii}

We now describe how we generate the \verb|approximations| of a \(p\)-adic object from its \verb|dependencies| and \verb|get_approximation| function.

Suppose we are given a \(p\)-adic object and an epoch \(n\), and we wish to compute the approximation of the object at the given epoch. The intrinsic \verb|BringToEpoch| does this for us:
\begin{lstlisting}
intrinsic BringToEpoch(x :: AnyPadExact, n :: RngIntElt)
  if #x`approximations lt n then
    for d in x`dependencies do
      BringToEpoch(d, n)
    xds := [d`approximations[n] : d in x`dependencies]
    xx := x`get_approximation(n, xds)
    x`approximations[n] := xx
\end{lstlisting}

First it checks if there is already an approximation at this epoch. If not, we run through the dependencies and bring these up to the same epoch recursively. Now we can construct a list of approximations of the dependencies at this epoch, pass this to \verb|get_approximation| to produce the required approximation, and update the \verb|approximations| list accordingly.

We also supply the intrinsic \verb"EpochApproximation" which returns the approximation at a given epoch:
\begin{lstlisting}
intrinsic EpochApproximation(x :: AnyPadExact, n :: RngIntElt)
  BringToEpoch(x, n)
  return x`approximations[n]
\end{lstlisting}

The true implementation of \verb|BringToEpoch| is slightly more complicated. The following subsections explain how.

\subsubsection{Saving the approximation}
Instead of simply saving the output of \verb|get_approximation| as a new approximation directly, we assign it using a generic intrinsic called \verb|SetApproximation|, which performs some checks. This includes checking that the approximation is of the right type; that, if it is an element, the approximation is an element of the approximation of its parent; and that the approximation is consistent with the current best approximation attached to the object.

Furthermore, our package assumes that if an object has an approximation at epoch \(n\), then it has approximations for all lower epochs. So what do we do if we are jumping from epoch 1 to 10, for example, how do we set the intermediate approximations? For each subtype of \verb|AnyPadExact|, there must be an intrinsic \verb|InterpolateEpochs| implemented which takes as input a \(p\)-adic object, a range of epochs, and an approximation at the top epoch. It must return a list of approximations for the intermediate epochs.

\begin{example}
The default implementation uses the approximation function to generate the intermediate values, provided we exceed the \verb"min_epoch". This is usually sufficient for structures. Note that the dependencies are guaranteed to be at the top epoch already.
\begin{lstlisting}
intrinsic InterpolateEpochs(x :: AnyPadExact,
    n1 :: RngIntElt, n2 :: RngIntElt, xx :: FldPadElt)
  if n1 ge x`min_epoch then
    return [x`get_approximation(n, xds)
      where xds := [d`approximations[n] : d in x`dependencies]
      : n in [n1..n2-1]]
  else
    error "not implemented: InterpolateEpochs with min_epoch>1"
\end{lstlisting}
\qedhigher
\end{example}

\begin{example}
For \(p\)-adic numbers, we coerce the approximation into the approximations of the parent field at the intermediate epochs.
\begin{lstlisting}
intrinsic InterpolateEpochs(x :: FldPadExactElt,
    n1 :: RngIntElt, n2 :: RngIntElt, xx :: FldPadElt)
  return [x`parent`approximations[n] ! xx : n in [n1..n2-1]]
\end{lstlisting}
\qedabove
\end{example}

Now if \verb|SetApproximation| is setting some approximation for a high epoch, it will use \verb|InterpolateEpochs| to fill in the gaps.

\subsubsection{Minimum epoch}

\begin{example}
Suppose we are implementing division of two \(p\)-adic numbers. Here is what looks like a reasonable implementation:
\begin{lstlisting}
intrinsic '/' (x :: FldPadExactElt, y :: FldPadExactElt)
  require IsDefinitelyNonzero(y)
  z := New(FldPadExact)
  z`parent := x`parent
  z`dependencies := [x, y]
  z`get_approximation := function (n, xds)
    return xds[1] / xds[2]
  return z
\end{lstlisting}
Note, however, that even though we checked that \verb|y| is non-zero, we are not guaranteed that all of its approximations are not weakly zero. If some of them are, then the division inside \verb|get_approximation| may raise an error.
\end{example}

To solve this, we have \verb|IsDefinitelyNonzero| return a second value, which is the smallest epoch at which an approximation for \verb|y| is not weakly zero. Note that since approximations may not become less precise as epoch increases, this implies that all approximations for \verb|y| are not weakly zero above this epoch. We can then set the new attribute
\begin{lstlisting}
attributes AnyPadExact: min_epoch
\end{lstlisting}
to this epoch.

The meaning of \verb|min_epoch| is that it is the smallest epoch for which the \verb|get_approximation| function should be called, and hence in our example, the division will only use non weakly zero approximations to \verb|y|.

To use \verb|min_epoch|, we simply need to insert the following line into \verb|BringToEpoch|
\begin{lstlisting}
n := Max(n, x`min_epoch)
\end{lstlisting}
which ensures that the epoch we are updating to is at least \verb|min_epoch|.

\begin{remark}
\label{xp-rmk-ensurenonzero}
For the particular case of division, and similar functions, we can take a different approach and implement it like so:
\begin{lstlisting}
intrinsic '/' (x :: FldPadExactElt, y :: FldPadExactElt)
  EnsureAllApproximationsAreNonzero(y)
  z := New(FldPadExactElt)
  z`parent = x`parent
  z`dependencies := [x, y]
  z`get_approximation := function (n, xds)
    return xds[1] / xds[2]
  return z
\end{lstlisting}
where, as the name suggests, the intrinsic \verb|EnsureAllApproximationsAreNonzero| ensures that all approximations of \verb|y| are not weakly zero. This is achieved by first calling \verb|IsDefinitelyNonzero| to check \verb|y| is nonzero and find an epoch at which its approximation is not weakly zero, and then by using \verb|InterpolateEpochs| and \verb|SetEpochs| to interpolate this approximation down to epoch 1.
\end{remark}

\subsubsection{Maximum epoch}
\label{xp-sec-max-epoch}

Analogous to \verb|min_epoch|, there is also
\begin{lstlisting}
attributes AnyPadExact: max_epoch
\end{lstlisting}
which is the maximum epoch at which \verb|get_approximation| should be called.

The intention here is that the user can set the maximum epoch on a \(p\)-adic object as a way of limiting the precision to which computations involving that object are performed.

The \verb|BringToEpoch| intrinsic is modified to insert a check that the target epoch is not greater than the \verb|max_epoch|, if it is set. If so, it will raise a precision error:
\begin{lstlisting}
if assigned x`max_epoch and n gt x`max_epoch then
  error "precision error: max_epoch exceeded"
\end{lstlisting}

We also supply the intrinsic \verb"CanBringToEpoch" which is the same as \verb"BringToEpoch" except that instead of raising a precision error when the \verb"max_epoch" is reached it returns false, and otherwise returns true to signal success.

\subsection{Precision strategies}
\label{xp-sec-strat-ii}

At present, we do not provide functionality analogous to the precision strategies (\S\ref{xp-sec-strat}) of the \ExactpAdics{} package. The only method for controlling precision currently available to the user is the \verb|max_epoch| attribute described in \S\ref{xp-sec-max-epoch}, which will cause an error to be raised if a computation requires too much precision.

Therefore currently, any functions which need to increase the precision of its inputs do so simply by trying each epoch in order. Hence, there are no \verb|Strategy| parameters in this package, and instead we can think of the sequence \(1,2,\ldots\) as the default strategy where the values are now epochs, not precisions.

\begin{example}
\label{xp-ex-valuation-ii}
\verb|Valuation| is implemented like this (cf. Example \ref{xp-ex-valuation}):
\begin{lstlisting}
intrinsic Valuation(x :: FldPadExactElt) -> Val_FldPadElt
  for n in 1,2,... do
    BringToEpoch(x, n)
    if not IsWeaklyZero(x) then
      return WeakValuation(x)
\end{lstlisting}
\qedabove
\end{example}

\section{Comparison of \ExactpAdics{} and \ExactpAdicsII{}}
\label{xp-sec-compare-ii}

\subsection{Complexity of updates}

Compare the procedures \verb|satisfy_dependencies| (\S\ref{xp-sec-getter-eval}) of \ExactpAdics{} and \verb|BringToEpoch| (\S\ref{xp-sec-update-ii}) of \ExactpAdicsII{}, which are the underlying means in each package of generating an approximation to a \(p\)-adic object.

In the latter, we satisfy each dependency recursively immediately. In the former, we perform a backwards pass to gather all dependencies together, followed by a forwards pass to satisfy dependencies.

The rationale for the behaviour of the former was discussed in \S\ref{xp-sec-getter-motivation}, and it comes down to the fact that the same \(p\)-adic object may appear multiple times in a dependency with different absolute precisions. By performing the backwards pass first, we can merge all such dependencies into one. On the other hand, in \verb"BringToEpoch" all dependencies are being brought to the same epoch, and therefore we can satisfy each dependency immediately without risk of it needing to be brought to a higher epoch later.

Additionally, in \ExactpAdics{}, satisfying a dependency is allowed to fail (i.e. the \verb|get_value| procedure of a \verb|Getter| is allowed to not return a value), which triggers a new backwards pass to find dependencies of this failed update, and an extra forwards pass will have to occur to satisfy these. Hence there is in principle no bound on the amount of dependency tracking required to update a single element, whereas in \ExactpAdics{} we do a single pass.

This is a necessary feature of the design of \ExactpAdics{}: because the \verb|update| function must update its target object to a given absolute precision, we must allow it the freedom to take a guess at the precision required of its dependencies, and then try a better guess if it turns out this was too low. This is because there are some operations where it is difficult or impossible to determine the dependency precisions in advance.

On the other hand, in \ExactpAdicsII{}, because there is a looser relationship between precisions and epochs, the \verb|get_approximation| function is not aiming for any specific precision. Instead, it simply needs to produce an approximation to the best precision it can.

\subsection{Number of updates}

In \ExactpAdicsII{}, an ``update'' can occur at each epoch. Since precisions are exponential in the epoch, then typically there are only a small number of epochs ever considered, rarely going beyond epoch 20. This limits the number of times the dependency tracking framework ever needs to consider a single object, and so the time spent doing dependency tracking is essentially a small constant times the number of variables in a computation.

On the other hand, in \ExactpAdics{} one can in principle increase the precision of an element by 1 many times, and each time the dependency tracking code will be invoked, so there is essentially no bound on the time spent doing this. To mitigate this, one could modify the package so that elements can only increase their precisions by large jumps, such as doubling each time.

\subsection{Implementing new functions}

To implement a new low-level operation in \ExactpAdics{}, such as addition of two \(p\)-adic numbers, requires implementing a \verb|Getter| which (a) can compute the precisions to which its dependencies are required; and (b) compute an approximation, given approximations of its dependencies. To do the same in \ExactpAdicsII{} only requires (b), and therefore implementing new functionality in the latter is often much quicker.

Furthermore, actually computing the dependency precisions can be slow:

\begin{example}
Let \(h(x) = f(x) g(x)\) be a product of two polynomials. Suppose we want to compute an approximation to \(h\) with the absolute precision of the \(k\)th coefficient (\(h_k\)) at least \(a_k\). Then we need \(f_i\) to absolute precision \(\max_j a_{i+j} - \val(g_j)\) and \(g_j\) to absolute precision \(\max_i a_{i+j} - \val(f_i)\).

Computing these absolute precisions is of the same order of complexity as performing the multiplication itself. On the other hand, the multiplication is implemented in a low level compiled language such as C, whereas our \ExactpAdics{} package is implemented in the high-level interpreted language Magma, and so computing these absolute precisions can be far more expensive.
\end{example}

\subsection{Precision optimality}

By design, all computations in \ExactpAdics{} are performed to as little precision as is possible to get the answer. In \ExactpAdicsII{}, we perform all computations starting from the same initial precision and keep doubling this precision as necessary. Hence the latter is not optimal in terms of precision used, but is typically within a factor of 2 of optimal.

It is possible for \ExactpAdicsII{} to be worse than this. Suppose \(x\) is cheap to compute approximations for, but loses a lot of precision along the way, so if it has an approximation in \verb|pAdicField(p,2^n)| then its precision is significantly less than \(2^n\). Also suppose that \(y\) is expensive to compute, and does not lose any precision. Let \(z = x+y\). Now because \(x\) loses a lot of precision, so does \(z\), and therefore computing an approximation to \(z\) requires a relatively high epoch. Computing the approximation to \(y\) at this epoch is expensive, but also unnecessary because it achieves this required precision at an earlier epoch.

In a sense, the epoch is not really a proxy for the precision of an element, but a proxy for the worst precision of all the dependencies of the element.

For most common applications, the amount of precision lost tends to be bounded and small as epoch increases, and so this effect is minimal.

\subsection{Precomputing dependencies}
\label{xp-sec-compare-optimize}

In \S\ref{xp-sec-optimize} we describe a generic optimization technique which can make updating the approximations of a selected \(p\)-adic object much quicker. This is done by pre-computing some of its dependency graph so that it can be traversed more efficiently.

This optimization opportunity is only possible in \ExactpAdicsII{}. Even if \ExactpAdics{} were redesigned to make the list of dependencies of an object explicit, so that a piece of the dependency graph could be precomputed, we would still need to do a backwards pass to find the minimal precision required of each dependency.

\subsection{Precision strategies}

In \ExactpAdics{}, whenever a function needs to increase the precision of an object in a non-canonical way, it does so according to a precision strategy (\S\ref{xp-sec-strat}), giving fine control over each precision tried.

On the other hand, \ExactpAdicsII{} currently has no such functionality other than setting the \verb|max_epoch| parameter on an object (\S\ref{xp-sec-strat-ii}). When a function needs to increase the precision of an object in a non-canonical way, it repeatedly increases the epoch by 1. In practice, precision strategies in \ExactpAdics{} will usually just repeatedly double the precision, which behaviour is almost the same as increasing the epoch by 1 in \ExactpAdicsII{}. In principle, the package could have strategies to control which epochs are used, but this is not yet implemented.

\subsection{Timings}
\label{xp-sec-timings}

\subsubsection{Dependency tracking}
\label{xp-sec-timings-addition}

In this section, we describe an experiment designed to stretch the dependency tracking capabilities of our packages. This involves performing a computation which involves thousands of intermediate variables, but the steps themselves are cheap to compute.

In this experiment, we define \(x_1 = 1, x_2 = 2 \in \QQ_2\) and for \(i=3,\ldots,10000\) we define \(x_i = x_{j_i} + x_{k_i}\) for some randomly chosen \(j_i,k_i \in \{1,2,\ldots,i-1\}\). Finally we define \(y = \sum_{i=1}^{10000} x_i\). We time how long it takes to compute \(y\) to absolute precision \(2^n\) for \(n=1,\ldots,16\), taking the total time --- this emulates a typical sequence of increasing the absolute precision of \(y\) 16 times. We repeat this 10 times with different random choices and take the mean.

This experiment is repeated using a number of different \(p\)-adic implementations, with the mean timings given in Table~\ref{xp-tbl-timings-addition}. Note that the random choices are made in advance and so are not timed, and we use the same random seed in each experiment, so precisely the same sequence of operations is being compared.

\begin{table}
\centering
\begin{tabular}{l rcrcr}
\hline
Experiment & \hspace{-5em}Time (sec) & & & & \\
\hline
(1) Builtin                         &   0.949 & & & & \\
(2) \ExactpAdics{}                  & 174.284 & = & 6.907 & + & 167.377 \\
(3) \ExactpAdicsII{}                &   7.330 & = & 0.400 & + &   6.930 \\
(4) \ExactpAdicsII{} (opt: default) &   7.047 & = & 0.528 & + &   6.519 \\
(5) \ExactpAdicsII{} (opt: fast)    &   2.006 & = & 0.460 & + &   1.546 \\
\hline
\end{tabular}
\caption[Timings for a highly dependent computation]{Timings for a highly dependent computation over different implementations, including two optimizations.}
\label{xp-tbl-timings-addition}
\end{table}

Experiment (1) uses the builtin inexact \(p\)-adics available in Magma, and so is a reasonable lower bound on what we can expect to achieve. Experiments (2) and (3) use the \ExactpAdics{} and \ExactpAdicsII{} packages, respectively. These timings are broken into two parts, the first part being the time to construct \(y\), and the second part being the time to increase its precision to \(2,4,\ldots,2^{16}\). We can see that the latter package outperforms the former significantly on both counts.

Experiments (4) and (5) are the same as (3), except we use the optimization techniques described in \S\ref{xp-sec-optimize} to make \(y\) directly depend only on \(x_1\) and \(x_2\). Experiment (4) uses the default version, which gives a small speed-up. Experiment (5) uses the ``fast'' version, which forgets the intermediate variables and uses the \verb|get_approximation| functions directly, and achieves a significant speed-up.

\subsubsection{Real-world example}

We compute the 2-part of the conductor of the hyperelliptic curve
\[C : y^2 = -2x^6-15x^4-37x^2-30\]
using our implementation of \cite{conductor} mentioned in \S\ref{xp-sec-intro}. This implementation can use either of our packages for its underlying \(p\)-adic computations. The curve has discriminant \(\Delta = - 2^{16} \cdot 3 \cdot 5\) and conductor \(N = 2^{10} \cdot 3 \cdot 5\).

When using \ExactpAdics{}, this takes 146 seconds, compared to 33 seconds for \ExactpAdicsII{}. The time spent in the dependency-tracking portion of code, which includes actually computing approximations, is 124 and 25 seconds respectively, with 22 and 12 seconds respectively left over to other computations.

With \ExactpAdics{}, this 124 seconds spent in dependency tracking is divided equally between generating approximations and tracking dependencies (this includes calling the update function and computing dependencies). About half of the latter is spent computing dependencies, most of the rest being logic comparing absolute precisions.

In fact, of the whole 146 seconds, 39 seconds is spent just constructing our representation of a valuation of a univariate polynomial. Individually this is fast, but we construct 240,000 of them throughout the algorithm. This demonstrates the benefit of using epochs instead of fine absolute precisions in \ExactpAdicsII{}.

Note that the number of times the dependency tracking framework is invoked is about 54,000 and 40,000 for the two packages. Given the same algorithms are used in both packages, we expect these numbers to be similar. In this case we do not appear to have the potential issue that the framework is invoked too often. The number of \(p\)-adic objects created is about 11,000 and 7,000 for the two packages.

\subsection{Conclusions}

Given the above arguments and evidence, we currently recommend the typical user to choose \ExactpAdicsII{} over \ExactpAdics{}.

On the other hand, if more of the internal workings were implemented at a lower level than the Magma language and optimized, then it may be that \ExactpAdics{} could be made comparably fast. Indeed, much of the comparative slowness in \ExactpAdics{} comes from the need for a lot of simple arithmetic to compute absolute precisions, which is typically slow in an interpreted language such as Magma.

\section{Additional structures}
\label{xp-sec-others}

So far we have described the representation of \(p\)-adic numbers and univariate polynomials over \(p\)-adic fields. We now briefly describe two more structures provided by the package.

\subsection{Multivariate polynomials}

A multivariate polynomial ring over a \(p\)-adic field is represented by the type \verb|RngMPol_FldPadExact| (analogous to the inexact type \verb|RngMPolElt| in Magma) which derives from \verb|StrPadExact|:

\begin{lstlisting}
type RngMPol_FldPadExact[RngMPolElt_FldPadExact]
attributes RngMPol_FldPadExact: base_ring, rank
\end{lstlisting}

Such a ring is defined by its \verb|base_ring|, an exact \(p\)-adic field (i.e. of type \verb|FldPadExact|), and by its \verb|rank|, the number of indeterminates.

An approximation of such a ring must be the multivariate \verb|PolynomialRing| of an approximation of the \verb|base_ring| of the same rank.

\subsection{Cartesian products}

The cartesian product of a number of exact \(p\)-adic structures is itself an exact \(p\)-adic structure, and has the type \verb|SetCart_PadExact| analogous to the type \verb|SetCart| for general cartesian products.

\begin{lstlisting}
type SetCart_PadExact[Tup_PadExact]
attributes SetCart_PadExact: components
\end{lstlisting}

Such a cartesian product is defined by its \verb|components|, a list of exact \(p\)-adic structures.

An approximation of this structure must be the cartesian product (a \verb|SetCart|) of an approximations of its components.

Why do we define this specialised form of cartesian products, when a general one exists already? The difference is that a standard tuple of exact \(p\)-adic values treats the component values as completely independent objects, whereas the exact tuple links them together in the sense that they have a single common update/approximation function. Therefore, the exact tuple is an appropriate choice for a collection of \(p\)-adic values which belong to some conceptually higher structure.

\begin{example}
Suppose we wish to implement a Hensel-lifting routine which takes as input a sequence \(F \in K[x_1,\ldots,x_n]^n\) of \(n\) multivariate polynomials of rank \(n\) over some \(p\)-adic field \(K\) and a sequence \(X \in K^n\) of \(n\) elements of \(K\) such that we can apply Hensel's lemma to deduce there is a root \(Y \in K^n\) of \(F\) close to \(X\), and returns the sequence \(Y\) (as in \S\ref{xp-sec-hensel-multiroot}).

To update the components of \(Y\) we perform a Hensel-lifting routine which is essentially some \(n \times n\) linear algebra depending on \(F\) and \(X\), the important point being that this computes all components of \(Y\) to some precision simultaneously; it is not possible to compute one component of \(Y\) to high precision in isolation. Therefore it makes sense to represent \(Y\) as a tuple with a single update function.

By comparison, if we represented \(Y\) as a sequence of independent values, then each component would still need to maintain its own approximation to the whole vector \(Y\) in order to perform Hensel lifting. Worse still, increasing the precision on one component would perform Hensel lifting, but then only update that one component even though the information is available to update all components. Therefore, increasing the precision of all components of \(Y\) would be \(n\) times too slow.
\end{example}

\section{Valuations}
\label{xp-sec-val}

% \begin{itemize}
% \item Valuations of \(p\)-adic numbers are integers, but they don't naturally lie in the usual ring of integers because multiplication of two valuations has no natural meaning. They more naturally lie in the ``tropical ring of integers'', the semiring \((\ZZ \cup \{\infty\}, \min, +)\). In some scenarios it makes sense to take the maximum of two valuations, so we also want to include the \(\max\) operation and its identity element \(-\infty\). Also sometimes fractional valuations are useful, so the space of valuations shall be the set \(Z := (\QQ \cup \{\pm\infty\}\).
% \item We define addition, negation and subtraction on \(Z\), where \(\infty - \infty := 0\) for convenience (in practice, the only time you take a difference of infinite valuations is when computing a relative precision, and the relative precision of exactly 0 is 0).
% \item Also define join, meet and diff. And scaling (multiplication or division by an integer or rational).
% \item Valuations of univariate/multivariate polynomials are defined to be the map from an exponent to a valuation. That is, polynomials of rank \(r\) have valuations in \(Z^{\NN^r}\). It is almost constant everywhere, usually \(\infty\) for a valuation of an actual element or an absolute precision, \(0\) for a relative precision, \(-\infty\) for a diff. Addition, negation, subtraction, join, meet, diff and scaling are defined pointwise.
% \item Valuations of cartesian products are similarly defined to be a tuple of valuations of each component, with operations defined pointwise.
% \end{itemize}

In our packages, the valuation of a \(p\)-adic element \verb|PadExactElt| is intended to be the finest measure available of the valuation of the components of the element. Because there are many different types of \(p\)-adic elements (e.g. numbers, polynomials, tuples), there are as many different types of valuations, all needing to be represented somehow. There are some operations common to all valuations, such as addition, so we define a new abstract type to represent all types of valuation:
\begin{lstlisting}
type Val_PadExactElt
\end{lstlisting}
and we shall later define sub-types corresponding to each \(p\)-adic structure.

Note that the difference of two valuations is also a valuation, corresponding to the division of two \(p\)-adic elements with those valuations. Therefore, all kinds of precisions --- absolute, relative and baseline --- are also valuations.

In the packages, we use valuations of subtype of \verb|Val_PadExactElt| to represent all valuations (including weak valuations) and all precisions. In particular, the input to an update function is a valuation in this form, representing the intended absolute precision.

\begin{lstlisting}
attributes Val_PadExactElt: value
\end{lstlisting}

The \verb|value| field of a valuation contains the actual value of the valuation, whose representation is element-dependent.

\subsection{Valuations of \texorpdfstring{\(p\)}{p}-adic numbers}

Ordinarily we think of the valuation of a \(p\)-adic number as an integer, except that:
\begin{itemize}
\item The valuation of zero is not an integer, it takes the special value \(\infty\).
\item Multiplication of two valuations is not a useful concept: it has no description in terms of the arithmetic of \(p\)-adic numbers.
\item On the other hand addition and infimum do make sense: the sum of two valuations corresponds to the multiplication of \(p\)-adic numbers, and the minimum of two valuations corresponds to the addition of \(p\)-adic numbers.
\item Multiplication and division of a valuation by an integer or rational number also does make sense, since it corresponds to exponentiation of a \(p\)-adic number.
\item It is useful to be able to talk about the valuation of elements in an extension, and these may be rational numbers.
\item Subtraction is useful to define, since a relative or baseline precision is the difference of two valuations. In particular, we also need to include the symbol \(-\infty := 0 - \infty\).
\item Supremum is also a useful operation: if we increase the absolute precision of a \(p\)-adic number several times, then its final absolute value is the maximum of the intermediate absolute precisions.
\end{itemize}
We deduce that the standard ring of integers \((\ZZ,+,\times)\) is not a useful structure for valuations to reside in; instead, we define the set \(Z := \QQ \cup \{\pm\infty\}\), elements of which we represent with the type:

\begin{lstlisting}
type Val_FldPadElt: Val_PadExactElt
\end{lstlisting}

The \verb|value| attribute is either an integer (a \verb|RngIntElt| in Magma), a rational number (a \verb|FldRatElt|) or \(\pm\infty\) (a \verb|Infty|).

The following operations are supported:
\begin{itemize}
\item Addition: Defined for all pairs of elements of \(Z\), except \(\infty + (-\infty)\) is left undefined and will cause an error.
\item Subtraction: Defined for all pairs of elements of \(Z\). In particular, \(\infty - \infty\) is defined to be 0; this is because the \(p\)-adic number 0 represented to infinite \(p\)-adic absolute precision has infinite weak valuation, and so \(\infty-\infty\) should be its relative precision, which is 0. While an arbitrary collection of additions and subtractions is not associative by these definitions, in practice if subtraction is only used to compute precisions, then the results will be well-defined.
\item Infemum and supremum (which are the operations \verb|meet| and \verb|join| in the Magma language).
\item Multiplication and division by rational numbers (which we term \emph{scaling}).
\item Equality, inequality, and orderings \(=, \neq, \leq, <, \geq, >\). In particular, \(Z\) is totally ordered.
\item An operation called \verb|diff| which is defined as follows: \verb|x diff y| is \verb|x| if \verb|x > y| and otherwise is \(-\infty\). Note that it is the lowest valuation \verb|z| such that \verb|z join y = x join y|. It has a natural interpretation in our context: if we require an element to have precision \verb|x| and its current precision is \verb|y| then \verb|x diff y| is the lowest valuation \verb|z| such that increasing the precision to \verb|z| suffices. Whilst defining such an operation for single \(p\)-adic numbers may seem like overkill, it turns out to be useful for aggregates.
\end{itemize}

\subsection{Valuations of aggregate structures}

All other \(p\)-adic structures in the package are aggregate structures, in the sense that they represent, perhaps recursively, a collection of \(p\)-adic numbers. As defined at the top of the section, a valuation in the package is the finest possible description of the components of a \(p\)-adic element, and therefore we represent valuations of an aggregate as an analogous aggregate of valuations. Specifically:

\begin{itemize}
\item Univariate polynomials: A polynomial \(f(x) = \sum_{i=0}^\infty f_i x^i \in K[x]\) over a \(p\)-adic field \(K\) may be more simply thought of as the infinite sequence \((f_0,f_1,\ldots)\) of its coefficients, which is zero for all but finitely many places. Correspondingly, its valuation we represent as the infinite sequence \((\val(f_0),\val(f_1),\ldots)\), which is \(\infty\) at all but finitely many places. If we subtract two such valuations pointwise, the result is an infinite sequence which is 0 at all but finitely many places. Most generally then, a valuation of a univariate polynomial is an infinite sequence which takes the same value at all but finitely many places.

In the package, we define the new type \verb|AssocDflt| which represents an associative array with a default value; that is, it has a default value so that if a key is not in the array, then the value of the array at that key is the default. These are useful for representing functions which are constant at all but finitely many places.

Valuations of univariate polynomials are represented by the type:
\begin{lstlisting}
type Val_RngUPolElt_FldPad: Val_PadExactElt
\end{lstlisting}
whose \verb|value| is a default associative array \verb|AssocDflt| whose keys are non-negative integers \(i\) and whose values are \verb|Val_FldPadElt|s.

\item Multivariate polynomials: A polynomial \[f(x_1,\ldots,x_r) = \sum_{e \in \{0,1,\ldots\}^r} f_e x_1^{e_1} \cdots x_r^{e_r} \in K[x_1,\ldots,x_r]\] of rank \(r\) over \(K\) can be thought of as the map \(e \mapsto f_e\) taking exponent vectors to the corresponding coefficient. As with univariate polynomials, this map is zero almost everywhere. In analogue with univariate polynomials, we represent the valuation of a multivariate polynomial with the type:
\begin{lstlisting}
type Val_RngMPolElt_FldPad: Val_PadExactElt
\end{lstlisting}
whose \verb|value| is a default associative array \verb|AssocDflt| whose keys are exponent vectors \(e\) and whose values are the corresponding \verb|Val_FldPadElt|s.

\item Tuples: Valuations of tuples \verb|Tup_PadExactElt| of exact \(p\)-adic elements are represented by the type:
\begin{lstlisting}
type Val_Tup_PadExactElt: Val_PadExactElt
\end{lstlisting}
whose value is a corresponding tuple of valuations, representing the valuations of the components of the tuple.
\end{itemize}

These valuations all support the following operations:
\begin{itemize}
\item Addition, subtraction, scaling, infimum (\verb|meet|), supremum (\verb|join|), \verb|diff|: These are all defined point-wise.
\item Equality and inequality: two valuations are equal iff they are equal point-wise.
\item Ordering: two valuations are ordered if that ordering applies point-wise.
\end{itemize}

Note that while the set \(Z = \QQ \cup \{\pm\infty\}\) of valuations for \(p\)-adic numbers is totally ordered --- and this ordering is respected by the ordering, infimum and supremum operations --- the valuations for aggregate \(p\)-adic elements are only partially ordered. For example two tuples in \(\QQ_2^2\) may have valuations \((1,2)\) and \((2,1)\) and so are not ordered relative to each other, or they may have valuations \((1,2) < (2,2)\). This partial ordering is respected by infimum and supremum; for example \verb|x join y| is the unique smallest valuation greater than or equal to both \verb|x| and \verb|y|.

\begin{example}
Suppose a univariate polynomial of degree 5 is known to absolute precision \(x = (3,5,8,10,13,2,\infty,\infty,\ldots)\). The infinite precisions indicate that we know that coefficients 6 upwards are precisely zero. Also suppose we want to increase its absolute precision to at least \(y = (10,10,\ldots)\). Then it suffices to increase it to \begin{align*}y \texttt{ diff } x &= (10 \texttt{ diff } 3, 10 \texttt{ diff } 5, 10 \texttt{ diff } 8, 10 \texttt{ diff } 10, \\ &\qquad 10 \texttt{ diff } 13, 10 \texttt{ diff } 2, 10 \texttt{ diff } \infty, \ldots) \\ &= (10, 10, 10, -\infty, -\infty, 10, -\infty, \ldots)\end{align*} and so we see it suffices to only increase the precisions of coefficients 0, 1, 2 and 5.
\end{example}

\section{Additional features}
\label{xp-sec-features}

We now describe some of the high-level features available in our packages, including notes on how they are implemented. The majority of these features are in both packages, but any pseudo-code in this section will be as in \ExactpAdicsII{}.

\subsection{Precomputing dependencies}
\label{xp-sec-optimize}

Suppose \(d=(d_1,\ldots,d_k)\) are \(p\)-adic objects, and \(x\) is some complicated expression in \(d\), such as in \S\ref{xp-sec-timings-addition}. Hence \(x\) does not depend directly on \(d\), it depends on some intermediate expressions which recursively ultimately depend on just \(d\). To compute an approximation for \(x\) requires traversing its dependency graph, including all these intermediate expressions, which will be time-consuming. If we do not care about the intermediate expressions, then it could be more efficient to compute approximations to \(x\) directly from \(d\).

In the \ExactpAdicsII{} package, we provide an intrinsic \verb|WithDependencies| which takes a \(p\)-adic object \(x\) and a list \(d\) of other \(p\)-adic objects and returns a copy of \(x\) whose direct \verb|dependencies| are precisely \(d\). The basic idea is that we pre-compute the piece of the dependency graph between \(x\) and \(d\), which the \verb|get_approximation| function can traverse efficiently.

Specifically, starting from \(x\), we recursively traverse its dependencies, gathering them together to form the set of all of its dependencies. Whenever we reach a dependency lying in \(d\), we terminate that branch of the recursion, so that we only find the dependencies between \(x\) and \(d\). Next, we sort these dependencies by \verb|id| into a list. Since the \verb|id|s are assigned sequentially, this also sorts according to dependency.

With its default behaviour, \verb|WithDependencies| also incorporates information about the \verb|min_epoch| of each dependency into this list: specifically the list is now a list of pairs \((y,m)\) where \(m\) is the maximum \verb|min_epoch| of \(y\) or anything depending on \(y\). Having precomputed this list, we can define \verb|get_approximation| to traverse this list in order: given an epoch \(n\), for each \((y,m)\) in the list, we compute an approximation to \(y\) at epoch \(\max(n,m)\) from its dependencies, which will already be at this epoch, and update \(y\) accordingly.

\verb|WithDependencies| also has a \verb|Fast| parameter which performs a more aggressive optimization. Note that the default behaviour still explicitly deals with all intermediate dependencies \((y,m)\), and in particular each such \(y\) is updated in the usual manner, which involves a number of consistency checks. The ``fast'' version ultimately forgets these dependencies entirely and instead just remembers the \verb|get_approximation| function attached to each one. These are called directly, one by one, with the approximations they return just appended to a temporary list, which is used as input when calling the next one, and so on. The last item in this list will be the approximation to \verb|x| returned by \verb|get_approximation|. As a result, there is no cacheing or consistency checking of each intermediate approximation, which can be a significant speed-up. The final answer, which is used to update \(x\), is still checked in the usual manner so we do not lose any safety.

Note that because the ``fast'' algorithm does not allow cacheing of intermediate variables, the \verb|min_epoch| of the created object must be the maximum of the \verb|min_epoch|s of all dependencies. Similarly its \verb|max_epoch| must be the minimum of those of its dependencies.

Furthermore, the ``fast'' algorithm assumes that the \verb"approximations" of the intermediate variables are all as produced by \verb"get_approximation". Therefore division, which can change the approximations of its dependencies (Remark~\ref{xp-rmk-ensurenonzero}), should not be an intermediate expression. For this reason, the \verb"Fast" parameter is false by default because it is not guaranteed to be safe. We also give division (and other intrinsics with the same issue) a \verb"Safe" parameter which, when true, does not use this trick and is therefore safe to be an intermediate expression, at the cost of a potentially higher \verb"min_epoch".

Timings demonstrating the benefits of using this optimization technique are given in \S\ref{xp-sec-timings-addition}. As noted in \S\ref{xp-sec-compare-optimize}, this generic optimization is not possible in \ExactpAdics{}.

\subsection{Valuation comparison}

A common \(p\)-adic operation is to compare the valuation of a \(p\)-adic number \(x\) with some given valuation \(v\). Consider the following code:
\begin{lstlisting}
if Valuation(x) gt 10 then
  ...
\end{lstlisting}
The first thing this does is compute the valuation of \(x\) precisely, and then compare the answer with 10. However, this is overkill: since there is no canonical way to increase the precision of \(x\) in order to find its valuation (which may be very high), then \verb|Valuation| will proceed according to some precision strategy, and therefore could never return an answer, or could raise a precision error.

We provide the following intrinsic:
\begin{lstlisting}
intrinsic ValuationGe(x, n)
  IncreaseAbsolutePrecision(x, n)
  return WeakValuation(x) ge n
\end{lstlisting}
so that \verb|ValuationGe(x, n)| is functionally very similar to \verb|Valuation(x) ge n| except that now there is a canonical way to increase the precision of \(x\) in order to get the answer, and it is guaranteed to produce a result with as little precision as required.

In reality, the definition of \verb|ValuationGe| is made a little more complex by checking if the answer is already known without increasing the precision of \(x\).

We similarly provide analogues \verb|ValuationEq|, \verb|ValuationNe|, \verb|ValuationLt|, \verb|ValuationGe| and \verb|ValuationGt| for the other comparison operators.

\subsection{Residue class fields and higher quotients}

Since Magma's inexact \(p\)-adics includes some functionality around residue class fields, we make similar functionality available in our package.

The intrinsic function \verb|ResidueClassField| takes as input an exact \(p\)-adic field \(K\) (type \verb|FldPadExact|) and returns its residue class field \(\FF\) (type \verb|FldFin|) and the quotient map \(q : \cO \to \FF\).

This is implemented by computing the residue class field of the \verb|approximation| field of \(K\) (i.e. \verb|ResidueClassField(K`approximation)|), which returns \(\FF\) and the quotient map \(\tilde{q} : \tilde \cO \to \FF\) where \(\tilde \cO\) is the integer ring of the \verb|approximation| field. Then \(q\) may be defined in terms of \(\tilde{q}\): given \(x \in K\), increase the absolute precision of \(x\) to at least 1, and then call \(\tilde{q}(\tilde{x})\).

The quotient map \(\tilde{q}\) also comes with a partial inverse, an embedding \(\tilde{q}^{-1} : \FF \into \tilde{\cO}\), which we similarly extend to a partial inverse \(q^{-1} : \FF \into \cO\). In this case, \(q^{-1}(x)\) is always given to absolute precision 1, and cannot have its absolute precision increased; in a sense, it refuses to choose among the many possible pre-images. In order to force such a choice, the intrinsic \verb|WeakApproximation| is provided, which takes as input an exact \(p\)-adic number, and returns another exact \(p\)-adic number which is equal to the input up to the precision of the input.

In a completely analogous manner, the intrinsic \verb|Quotient(K, n)| returns the ring \(\cO / \pi^n \cO\) and the quotient map \(q\), which again has a partial inverse. Hence \verb|Quotient(K, 1)| and \verb|ResidueClassField(K)| are equivalent, except that the latter represents the result as a field, and not a more general ring.

\subsection{Completions of number fields}
\label{xp-sec-completions}

Magma's inexact \(p\)-adics includes some functionality around taking completions of number fields at finite primes, so we make similar functionality available in our package.

The procedure \verb|ExactCompletion| takes as input a number field \(F\) and a finite place \(\fp\) of \(F\), and returns the completion \(K:=F_\fp\) as an exact \(\fp\)-adic field, and the embedding map \(e : F \into K\).

This is implemented around the builtin intrinsic \verb|Completion| which takes the same inputs, and returns the completion \(\tilde{K}\) as a semi-exact \(p\)-adic field, and the embedding \(\tilde{e} : F \into \tilde{K}\). Then \(K\) is simply an exact \(p\)-adic field whose approximation is \(\tilde{K}\), and \(e : F \into K\) returns an element whose update function uses \(\tilde{e}\) to embed the input element of \(F\) into \(\tilde{K}\) to sufficiently high precision.

\subsection{Newton polygons}

The following definitions and results are standard, if not the notation.

\begin{definition}
If \(f(x) = \sum_{i=0}^d f_i x^i \in K[x]\) is a polynomial over a \(p\)-adic field \(K\), then its \emph{Newton polygon} \(\cN(f)\) is the lower convex hull in \(\QQ \times \QQ\) of the points \((i, \val(f_i))\). It can also be interpreted as the graph of a function \([0,d] \to \QQ\), also denoted by \(\cN(f)\). By definition, this function is continuous, convex and piece-wise linear. If \(\cF\) is a face of the Newton polygon, i.e. a line segment from \((i_0,v_0)\) to \((i_1,v_1)\), then its \emph{width} is \(w(\cF)=w=i_1-i_0\) and its \emph{slope} is \(s(\cF) = \tfrac{v_1-v_0}{i_1-i_0}\). Writing \(s(\cF) = -\tfrac{h}{e}\) in lowest terms, then the \emph{ramification degree} of the face is \(e(\cF)=e\), and the \emph{residual polynomial} is \(r(\cF)(x) = \sum_{i=0}^{w/e} \overline{f_{ie+i_0} \pi^{ih-v_0}} \in \FF_K[x]\).
\end{definition}

\begin{lemma}
\label{xp-lem-newtonpgon}
If \(\cF\) is a face of \(\cN(f)\), then \(f\) has precisely \(w(\cF)\) roots in \(K^{\mathrm{alg}}\) of valuation \(-s(\cF)\). Writing \(-s(\cF)=h/e\) in lowest terms, if \(r\) is such a root, then \(r^e \pi^{-h}\) has valuation 0 and \(r(\cF)(\overline{r^e \pi^{-h}}) = 0\). Furthermore the roots \(r\) of valuation \(h/e\) are in \(e\)-to-1 correspondence with roots (possibly repeated) of \(r(\cF)(x)\) via \(r \mapsto \overline{r^e \pi^{-h}}\).
\end{lemma}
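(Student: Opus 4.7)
The plan is to prove both assertions simultaneously by factoring $f$ completely over $K^{\mathrm{alg}}$ and relating the Newton polygon to the multiset of root valuations. First I would extend $\val$ uniquely to $K^{\mathrm{alg}}$ and write $f(x) = f_d \prod_{j=1}^{d}(x - r_j)$, sorting the roots so that $\val(r_1) \leq \val(r_2) \leq \cdots \leq \val(r_d)$. Using $f_i = (-1)^{d-i} f_d\, e_{d-i}(r_1,\ldots,r_d)$ together with the ultrametric inequality, I would prove the two-sided bound
\[
\val(f_i) \geq \val(f_d) + \sum_{j > i}\val(r_j),
\]
with equality whenever $\val(r_i) < \val(r_{i+1})$ (or at the endpoints). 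This identifies the vertices of $\cN(f)$ as precisely the breakpoints of the sequence $\val(r_1),\ldots,\val(r_d)$, so a face of slope $-s$ and width $w$ corresponds to exactly $w$ roots of valuation $s$. This yields the first assertion.

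Next I would fix a face $\cF$ from $(i_0,v_0)$ to $(i_1,v_1)$ of slope $-h/e$ with $\gcd(h,e)=1$ and $w = i_1 - i_0$, and focus on the $w$ roots $r_{i_0+1},\ldots,r_{i_1}$ of valuation $h/e$. Setting $t_j := r_j^e \pi^{-h}$ (which has valuation $0$), I would re-express the coefficient of $x^{i_0+ke}$ on the face as follows. Expand $f_{i_0+ke} = (-1)^{d-i_0-ke} f_d\, e_{d-i_0-ke}(r_1,\ldots,r_d)$ and split each elementary symmetric term according to how many factors come from $\{r_{i_0+1},\ldots,r_{i_1}\}$. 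A bookkeeping argument with the definition of slope shows that after multiplying by $\pi^{kh-v_0}$, every summand using a ``wrong'' multiset of roots acquires strictly positive valuation, while the summands using exactly $w-k$ of the ``off-face'' high-valuation roots and $k$ off-face low-valuation roots (these contribute a fixed unit mod $\pi$) pair up with the $(w-k)$-th elementary symmetric polynomial in the $t_j$'s. Reducing modulo $\pi$ gives
\[
r(\cF)(x) \;=\; c \cdot \prod_{j=i_0+1}^{i_1}\!\bigl(x - \bar t_j\bigr)
\]
for some $c \in \FF_K^{\times}$, which proves the second assertion: each such $r$ satisfies $r(\cF)(\overline{r^e \pi^{-h}}) = 0$.

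Finally, for the $e$-to-$1$ correspondence, I would argue that the map $r \mapsto \bar t = \overline{r^e \pi^{-h}}$ on the multiset of roots of valuation $h/e$ has uniform fibres of size $e$. Given a root $\bar t_0 \in \overline{\FF_K}$ of $r(\cF)$, the product formula just obtained shows that the number of $j$ with $\bar t_j = \bar t_0$ (counted with multiplicity) equals the multiplicity of $\bar t_0$ in $r(\cF)$; converting back from $t_j$'s to $r_j$'s multiplies the count by $e$ because for any fixed $\bar t_0$, the equation $y^e \pi^{-h} \equiv $ (lift of $\bar t_0$) $\pmod{\pi^{?}}$ has, within the set of valuation-$h/e$ elements of $K^{\mathrm{alg}}$, exactly $e$ solutions modulo the residue relation (the fibres of $y \mapsto y^e \pi^{-h} \bmod \pi$ over a unit residue are permuted freely by the group of $e$-th roots of unity, which all lie in $K^{\mathrm{alg}}$).

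The main obstacle will be the middle step: carefully tracking which summands in the elementary symmetric polynomial contribute to the residue and which have positive valuation. The key estimate is that picking a root of valuation $v' \neq h/e$ in place of one of valuation $h/e$ shifts the valuation of the summand by $v' - h/e$, and the slope condition forces the cumulative shift along the face to be strictly positive unless one stays within the $w$ roots of valuation $h/e$. Once this combinatorial estimate is pinned down, the rest is formal manipulation of symmetric polynomials and a straightforward orbit count for the $e$-to-$1$ claim.
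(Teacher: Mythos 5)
The paper does not prove this lemma; it is stated as a standard result (``the following definitions and results are standard''), so there is no ``paper's proof'' to compare against. Your sketch is the usual textbook route (factor over $K^{\mathrm{alg}}$, read off the Newton polygon from the sorted root valuations, expand the coefficients on a face via elementary symmetric polynomials, reduce modulo $\pi$). The first part of the argument is sound modulo a bookkeeping slip: with the sort $\val(r_1)\leq\cdots\leq\val(r_d)$, the ultrametric lower bound is $\val(f_i)\geq\val(f_d)+\sum_{j\leq d-i}\val(r_j)$ (the $d-i$ \emph{smallest} valuations), not $\sum_{j>i}$; your written inequality matches a decreasing sort, so the indices and equality condition need to be made consistent.

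The genuine gap is in the middle step and its use in the last paragraph. The claimed product formula
\[
r(\cF)(x)\;=\;c\cdot\prod_{j=i_0+1}^{i_1}\bigl(x-\bar t_j\bigr)
\]
cannot hold: the left side has degree $w/e$ while the right side has degree $w$. Already for $f(x)=x^2-p$ over $\QQ_p$ one finds $r(\cF)(x)=x-1$ but $\prod_j(x-\bar t_j)=(x-1)^2$. The correct identity is $\prod_j(x-\bar t_j)=c\cdot r(\cF)(x)^e$, and establishing \emph{that} is precisely where the $e$-to-$1$ correspondence comes from --- it does not come for free from the expansion you describe. The bookkeeping in your expansion actually produces the units $e_{w-ke}(\bar s_{i_0+1},\ldots,\bar s_{i_1})$ with $s_j=r_j\pi^{-h/e}$ (so $t_j=s_j^e$), i.e.\ a selection of elementary symmetric polynomials in the $\bar s_j$, not an elementary symmetric polynomial in the $\bar t_j$; relating that selection to $\prod_j(x-\bar t_j)$ is an extra argument (for instance, base-change to $K(\pi^{1/e})$ to reduce to the $e=1$ case, or a direct computation identifying the degree-$w/e$ ``diagonal'' of $\prod_j(y-\bar s_j)$). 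Finally, the $\mu_e$-orbit argument in your last paragraph does not repair the degree mismatch: the fibre of $r\mapsto\bar t$ over a root of $r(\cF)$ of multiplicity $m$ has $em$ elements but need not be a union of $\mu_e$-orbits of roots of $f$ --- e.g.\ for $f(x)=(x^2-p)(x^2-p(1+p))$ the four roots all map to $\bar t=1$, a root of $r(\cF)=(x-1)^2$ of multiplicity $2$, yet $\sqrt{p}$ and $\sqrt{p(1+p)}$ do not differ by a root of unity. The conclusion is correct, but it must be extracted from the corrected product formula (or an equivalent count), which your sketch has not yet established.
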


Hence the Newton polygon and related quantities provide much information about the roots of a polynomial, and so are invaluable in scenarios such as root-finding or factorization of polynomials.

We provide an intrinsic
\begin{lstlisting}
intrinsic NewtonPolygon(f :: RngUPolElt_FldPadExact
  : Support:=<0,Degree(f)>)
    -> NwtnPgon
\end{lstlisting}
which takes as input a \(p\)-adic polynomial \verb|f| and returns its Newton polygon. Since computing this involves computing the valuations of some of its coefficients, which may initially be weakly zero, it takes a \verb|Strategy| parameter. It also takes a \verb|Support| parameter which is a pair of integers representing a range, and the returned value will be a sub-polygon of the full Newton polygon supported on at least this range; this can be useful if, for example, the polygon might have a single root at 0, and so it suffices to get the piece of the Newton polygon on \([1,\infty)\).

The Newton polygon is computed as follows. We loop through precisions in the \verb|Strategy| and for each one, compute a corresponding approximation \verb|xf| of \verb|f|. We compute the \emph{lower weak Newton polygon} of \verb|xf|, defined to be the lower convex hull of the points \((i,w_i)\) where \(w_i\) is the weak valuation of the \(i\)th coefficient of \verb|xf|. We also compute the \emph{upper weak Newton polygon} of \verb|xf|, defined to be the lower convex hull of the points \((i,w_i)\) such that the \(i\)th coefficient of \verb|xf| is not weakly zero, and therefore \(w_i=\val(f_i)\). The lower weak Newton polygon lies below the Newton polygon, which in turn lies below the upper weak Newton polygon. Therefore if the weak polygons overlap anywhere, then that overlap is a section of the Newton polygon (see Figure \ref{xp-fig-newtonpgon}). If this section includes all of the \verb|Support| then we are done, otherwise we move on to the next precision in the strategy.

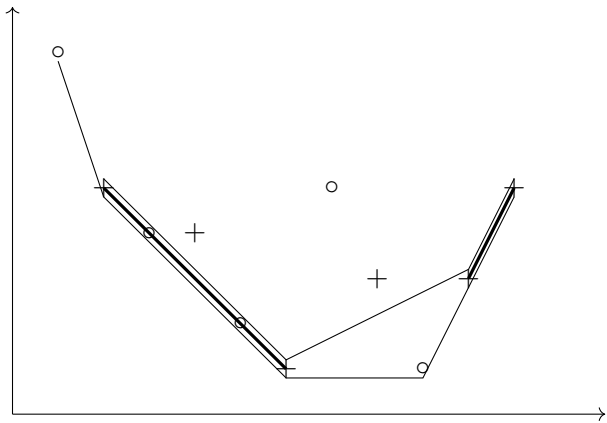
\begin{figure}
\centering
\begin{tikzpicture}[scale=0.6]
\draw[->](-1,-1)--(-1,8);
\draw[->](-1,-1)--(12,-1);
\foreach \x/\y in {0/7,2/3,4/1,6/4,8/0}
  \node(\x\y) at (\x,\y){\(\circ\)};
\foreach \x/\y in {1/4,3/3,5/0,7/2,9/2,10/4}
  \node(\x\y) at (\x,\y){\(+\)};
\draw([yshift=-0.2cm]07.center)--([yshift=-0.2cm]14.center)--([yshift=-0.2cm]50.center)--([yshift=-0.2cm]80.center)--([yshift=-0.2cm]104.center);
\draw[very thick](14.center)--(50.center);
\draw[very thick](92.center)--(104.center);
\draw([yshift=0.2cm]14.center)--([yshift=0.2cm]50.center)--([yshift=0.2cm]92.center)--([yshift=0.2cm]104.center);
\end{tikzpicture}
\caption[Computation of Newton polygon]{Computation of a section of a Newton polygon (heavy line) from lower and upper weak Newton polygons. Circles indicate the weak valuations of weakly zero coefficients, crosses indicate valuations of non weakly zero coefficients. Observe that since each end of the leftmost piece of the Newton polygon is at a vertex of the lower polygon, then these must also be vertices of the Newton polygon; contrast with the rightmost piece, in which the face could extend further to the left.}
\label{xp-fig-newtonpgon}
\end{figure}

\subsection{Ramification polygons and transition functions}
\label{xp-sec-ramification}

The ramification filtration of \(\Gal(L/K)\), the Hasse-Herbrand transition function and the upper-numbering of ramification groups are all standard, and appear for instance in Serre \cite[Ch. IV]{SerLF}. The theory extends to non-Galois extensions \cite{Helou}, which we summarise now.

\begin{definition}
Given a finite extension \(L/K\) of \(p\)-adic fields, its \emph{Galois set} \(\Gamma(L/K)\) is the set of \(K\)-embeddings of \(L\) into a normal closure --- this is a generalization of the Galois group. For \(\sigma \in \Gamma\), we define \(\val(\sigma) := \min_{x \in \cO_L} \val_L(\sigma x - x)\) and \(\Gamma_v := \{\sigma \in \Gamma \,:\, \val(\sigma) \geq v \}\) for \(v \geq 0\). The \emph{(lower) ramification breaks of \(L/K\)} are the \(v\) at which the function \(v \mapsto \abs{\Gamma_v}\) is discontinuous. We define the \emph{transition function} \[\phi_{L/K}(v) = \frac{1}{e(L/K)} \int_0^v \abs{\Gamma_t} dt\] which is continuous, piecewise linear, increasing and hence bijective \([0,\infty) \to [0,\infty)\), and letting \(\psi_{L/K}\) be its inverse, we define \(\Gamma^u = \Gamma_{\psi(u)}\). This defines the \emph{upper ramification numbering}. We define \(L^u = L_v\) to be the fixed field of \(\Gamma^u = \Gamma_v\) (where \(u=\phi(v)\)).
\end{definition}

The following lemma summarizes some key apsects of the theory. In particular, the upper numbering is well-behaved under changing the top field and fixing the base field, much in the way that the lower numbering is well-behaved under changing the base field. It also shows that the Galois correspondence generalizes to the sets \(\Gamma^u\).

\begin{lemma}[{\cite[Prop. 2, Rmk. 3, Prop. 3]{Helou}}]\hfill
\begin{enumerate}[noitemsep,nolistsep,label={\upshape(\alph*)}]
\item If \(M/L/K\) then \(\phi_{M/K} = \phi_{L/K} \circ \phi_{M/L}\).
\item Also \(\Gamma_{L/K}^u = \{\sigma|_L \,:\, \sigma \in \Gamma_{M/K}^u\}\), and in particular \(\Gamma_{L/K}^u\) are restrictions of elements of \(\Gal(L/K)^u\).  
\item \((L:L^u) = \abs{\Gamma^u}\) and so in particular \(L^u\) is the subfield of \(L\) fixed by \(\Gal(L/K)^u\).
\end{enumerate}
\end{lemma}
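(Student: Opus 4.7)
My plan is to reduce all three statements to the classical theorems of Herbrand and Hasse by passing to a Galois closure. Let $\tilde M/K$ be a Galois closure of $M/K$, and write $G = \Gal(\tilde M/K)$, $H = \Gal(\tilde M/L)$ and $N = \Gal(\tilde M/M) \leq H$. The Galois sets admit a coset description,
\[ \Gamma(\tilde M/K) = G, \qquad \Gamma(M/K) \leftrightarrow G/N, \qquad \Gamma(L/K) \leftrightarrow G/H, \qquad \Gamma(M/L) \leftrightarrow H/N, \]
a coset $gU$ corresponding to the embedding $x \mapsto g(x)$. The one genuinely new ingredient is the compatibility, under these identifications, of the filtration $\Gamma_v = \{\sigma : \val(\sigma x - x) \geq v \text{ for all } x\}$ with the classical ramification filtration on $G$: for $g \in G$, one checks that $g \in G_v$ (computed in $\tilde M$) translates, after dividing by the appropriate ramification index, into the condition $\min_{x \in \cO_L} \val_L(g|_L x - x) \geq v$, and similarly for $G/N$. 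This is a standard but technical computation; once in place, everything else is group-theoretic.

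For (a), the classical Herbrand theorem applied to the three Galois towers $\tilde M/L/K$, $\tilde M/M/K$, $\tilde M/M/L$ gives
\[ \phi_{\tilde M/K} = \phi_{L/K}\circ\phi_{\tilde M/L}, \qquad \phi_{\tilde M/K} = \phi_{M/K}\circ\phi_{\tilde M/M}, \qquad \phi_{\tilde M/L} = \phi_{M/L}\circ\phi_{\tilde M/M}. \]
Substituting the third into the first, equating with the second, and cancelling the bijection $\phi_{\tilde M/M}$ yields $\phi_{M/K} = \phi_{L/K}\circ\phi_{M/L}$.

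For (b), since the upper-numbered ramification groups of a Galois extension are characteristic (in particular normal), the classical statement ``upper numbering commutes with quotients'' says that the image of $G^u$ in a Galois quotient is the $u$-th upper ramification group of that quotient. Applying this to the surjections of coset spaces $G \to G/N \to G/H$ (which by the compatibility above respect the ramification filtrations), we deduce that $\Gamma_{L/K}^u$ is the image of $G^u$ in $G/H$, and each of its elements is the restriction to $L$ of some element of $\Gamma_{M/K}^u$.

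For (c), specialise (b) to $M = \tilde M$, so $\Gamma_{L/K}^u$ corresponds to $G^u H/H \subseteq G/H$, with $|\Gamma_{L/K}^u| = |G^u H|/|H| = |G^u|/|G^u \cap H|$. The fixed field in $L$ is $L^u = L \cap \tilde M^{G^u}$, and since $G^u \triangleleft G$ the Galois correspondence on $\tilde M$ gives $[L \cap \tilde M^{G^u}:K] = [G:G^u H]$. Therefore
\[ (L:L^u) = \frac{[L:K]}{[L\cap \tilde M^{G^u}:K]} = \frac{|G|/|H|}{|G|/|G^u H|} = \frac{|G^u H|}{|H|} = |\Gamma_{L/K}^u|. \]
When $L/K$ is Galois, $\Gamma(L/K) = \Gal(L/K)$ and $L^u$ becomes the subfield fixed by $\Gal(L/K)^u$, giving the parenthetical consequence. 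The main obstacle is the opening compatibility step: reconciling the valuation-theoretic definition of $\Gamma_v$ on the coset spaces with the images and intersections of the Galois ramification filtration on $G$, once the differing normalisations of $\val_{\tilde M}, \val_M, \val_L$ are accounted for. Once that bookkeeping is settled, (a)--(c) are purely formal.
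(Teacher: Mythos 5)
The paper offers no proof of this lemma; it is cited directly from Helou, so the proposal must stand on its own. Your overall strategy --- pass to a Galois closure $\tilde M/K$, identify the Galois sets with coset spaces $G/H$, $G/N$, $H/N$, and reduce to the Serre--Herbrand machinery --- is the standard and correct one, and the coset bookkeeping in (b) and (c) is sound.

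The weak point is the step you flag as ``standard but technical'': the compatibility between the filtration on $\Gamma(L/K)$ and that on $G$. Your version --- that $g\in G_v$ translates, after dividing by the appropriate ramification index, into $\min_{x\in\cO_L}\val_L(g|_L x-x)\geq v$ --- is incorrect as stated, and the error is not cosmetic. First, $\min_{x\in\cO_L}\val_L(g|_L x-x)$ depends only on the coset $gH$ while $g\in G_v$ depends on $g$ itself; different representatives of the same coset sit in different $G_v$'s, so no pointwise ``translation'' can be well-defined. Second, even with the right representative, the relationship is not a linear rescale. The correct statement (Serre, \emph{Corps Locaux}, Ch.~IV \S 3, valid for arbitrary $H\leq G$, not only normal) is that, up to the usual shifts, $\min_{x\in\cO_L}\val_L(\sigma x-x)$ equals $\phi_{\tilde M/L}$ evaluated at $\max_{s\in\sigma}i_G(s)$, where $i_G(s)=\min_{x\in\cO_{\tilde M}}\val_{\tilde M}(sx-x)$. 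The maximum over the coset and the intervention of the nonlinear transition function $\phi_{\tilde M/L}$ are both essential: they are exactly what give $\Gamma(L/K)_v = G_{\psi_{\tilde M/L}(v)}H/H$, from which (a) follows by differentiating and integrating $\phi$, and (b) follows as $\Gamma_{L/K}^u = G^uH/H$. Your phrase ``classical Herbrand theorem applied to the three Galois towers $\tilde M/L/K$, $\tilde M/M/K$, $\tilde M/M/L$'' also obscures that $L/K$, $M/K$, $M/L$ are not Galois and their transition functions are precisely what is being constructed; what one needs is Herbrand's argument phrased on coset spaces, which is exactly the content of Serre's lemma. Replace the ``divide by a ramification index'' claim with that lemma and the proposal closes up into a complete proof.
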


Computing quantities such as the transition function and upper/lower ramification breaks of an extension \(L/K\) is therefore of use when considering the Galois action of inertia or higher ramification groups. To compute these, we use ramification polygons, detailed decriptions of which appear in \cite[\S4--5]{GP} and \cite[\S3]{PS}. We summarize the key points here.

\begin{definition}
Suppose \(U/K\) is unramified, degree \(d\), \(f(x) \in U[x]\) is Eisenstein degree \(e\), defining the totally ramified \(L/U\), with uniformizer \(\pi \in L\) such that \(f(\pi)=0\). Then the \emph{ramification polygon of \(L/K\)} is the Newton polygon of the polynomial \(f(x + \pi)\) (which is supported on \([1,e]\)) with an additional horizontal face supported on \([e,ed]\).
\end{definition}

\begin{lemma}
The lower ramification breaks of \(L/K\) are \(v\) where \(-v\) is a slope of a face of the ramification polygon. The corresponding \(\abs{\Gamma_v}\) is the abscissa of the right hand vertex of the corresponding face. Letting \(v_0=0<\ldots<v_t\) be the lower breaks in sorted order and \(s_i = \abs{\Gamma_{v_i}}\), and letting \(u_0=0<\ldots<u_t\) be the upper breaks (i.e. \(u_i = \phi_{L/K}(v_i)\)) then \[\frac{u_{i+1} - u_i}{v_{i+1} - v_i} = \frac{s_i}{e(L/K)}\] gives a means to compute any one of these three sequences from the other two.
\end{lemma}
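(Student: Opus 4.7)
The plan is to prove the two assertions separately: the first by reducing it to Lemma~\ref{xp-lem-newtonpgon} applied to $f(x+\pi)$, and the second by direct evaluation of the integral defining $\phi_{L/K}$.

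For the first assertion, the nonzero roots of $f(x+\pi)$ in an algebraic closure are exactly the elements $\sigma\pi - \pi$ as $\sigma$ ranges over $\Gamma_{L/U} \setminus \{\mathrm{id}\}$. The crucial input is the identity $\val(\sigma) = \val_L(\sigma\pi - \pi)$ for $\sigma \in \Gamma_{L/U}$: since $L/U$ is totally ramified, $\cO_L = \cO_U[\pi]$, and for any $x = g(\pi) \in \cO_L$ with $g \in \cO_U[T]$ the non-archimedean inequality gives $\val_L(\sigma x - x) \geq \val_L(\sigma\pi - \pi)$, so the minimum defining $\val(\sigma)$ is attained at $\pi$. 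Applying Lemma~\ref{xp-lem-newtonpgon} to $f(x+\pi)$ then matches each face of slope $-v$ with the $\sigma \in \Gamma_{L/U}$ of valuation exactly $v$; accumulating face widths from left to right and adding $1$ for the identity identifies $|\Gamma_{L/U,v}|$ with the abscissa of the right-hand vertex of the face of slope $-v$.

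To promote this from $\Gamma_{L/U}$ to $\Gamma_{L/K}$, I would observe that any $K$-embedding $\sigma$ whose restriction to $U$ is nontrivial satisfies $\val(\sigma) = 0$: choose $x \in \cO_U$ with $\sigma x \neq x$, so $\sigma x - x \in \cO_U^\times$. Consequently $\Gamma_{L/K,v} = \Gamma_{L/U,v}$ for every $v > 0$, so the positive breaks and their cardinalities are read off the polygon of $f(x+\pi)$ exactly as above. The only remaining datum is $|\Gamma_{L/K,0}| = [L:K] = ed$, which is precisely the right-hand abscissa of the appended horizontal face on $[e,ed]$ in the definition of the ramification polygon, completing the first assertion.

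For the second assertion, $t \mapsto |\Gamma_t|$ is a non-increasing step function that is constant on each open interval $(v_i, v_{i+1})$. Substituting this constant into
\[\phi_{L/K}(v_{i+1}) - \phi_{L/K}(v_i) = \frac{1}{e(L/K)} \int_{v_i}^{v_{i+1}} |\Gamma_t|\,dt\]
and dividing by $v_{i+1} - v_i$ yields the stated linear slope relation; combined with $u_0 = v_0 = 0$, this formula determines any one of $(u_i)$, $(v_i)$, $(s_i)$ from the other two. The main technical step is the identity $\val(\sigma) = \val_L(\sigma\pi - \pi)$ for $\sigma \in \Gamma_{L/U}$; everything else is bookkeeping on Newton polygons together with a one-line integration of a piecewise constant function.
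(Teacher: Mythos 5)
Your proof follows the same outline as the paper's own (compressed) proof, and the extra detail you supply is welcome: in particular the reduction of $\val(\sigma)$ to $\val_L(\sigma\pi - \pi)$ via $\cO_L = \cO_U[\pi]$, and you correctly work with $f(x+\pi)$ where the paper's proof slips and writes $f(x-\pi)$. Nevertheless two steps do not quite go through as written.

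The lesser slip: ``choose $x \in \cO_U$ with $\sigma x \neq x$, so $\sigma x - x \in \cO_U^\times$'' does not follow, since a nonzero element of $\cO_U$ need not be a unit. What you need is that $\sigma|_U \neq \mathrm{id}$ acts nontrivially on the residue field $\FF_U$ (true because $U/K$ is unramified, so the Galois action on residues is faithful); then pick $x$ whose \emph{residue} is moved by $\sigma$, and $\sigma x - x$ really is a unit. The more serious issue is in the second assertion: you assert that substituting ``this constant'' into the integral yields ``the stated linear slope relation'' without ever identifying the constant. On $(v_i, v_{i+1})$ the set $\Gamma_t = \{\sigma : \val(\sigma) \geq t\}$ equals $\Gamma_{v_{i+1}}$, since $t > v_i$ removes every $\sigma$ with $\val(\sigma) = v_i$ and there are no valuation values strictly between $v_i$ and $v_{i+1}$; so the constant is $s_{i+1}$, and the integral gives $\frac{u_{i+1}-u_i}{v_{i+1}-v_i} = \frac{s_{i+1}}{e(L/K)}$, which is off by one in the index from the relation as stated. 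A concrete check: for $L = \QQ_2(\zeta_8)/\QQ_2$ one has $(v_0,v_1,v_2)=(0,2,4)$, $(s_0,s_1,s_2)=(4,4,2)$, $(u_0,u_1,u_2)=(0,2,3)$, $e(L/K)=4$, so the slope from $i=1$ to $i=2$ is $\tfrac12 = s_2/e$, not $s_1/e = 1$. By skipping the computation of the constant, your argument neither verifies the stated relation nor catches this discrepancy. The bookkeeping is otherwise sound, and with the index corrected to $s_{i+1}$ the three sequences remain mutually determined (with $s_0 = ed = [L:K]$ read off the appended horizontal face), but as presented the key identity is asserted rather than proved.
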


\begin{proof}
Since \(\cO_L = \cO_U[\pi]\), for \(\sigma \in \Gamma(L/U) = \Gamma(L/K)_1\) we have \(\val(\sigma) = \val(\sigma(\pi)-\pi) > 0\). Now \(\sigma(\pi)-\pi\) are precisely the roots of \(f(x-\pi)\), and so by Lemma \ref{xp-lem-newtonpgon} their valuations correspond to faces of the ramification polygon. Specifically, if \(-v\) is the slope of the face and \(w\) its width, then there are \(w\) elements \(\sigma \in \Gamma(L/U)\) such that \(\val(\sigma) = v\). Accumulating these widths from the left gives the sizes of \(\Gamma_v\), as claimed, for \(v>0\). The extra horizontal face by construction has slope 0 and vertex at \(ed=(L:K)=\abs{\Gamma}=\abs{\Gamma_0}\). The formula relating \(v_i, s_i, u_i\) follows from the definition of \(\phi_{L/K}\) as an integral.
\end{proof}

Hence the slopes and abscissa of vertices of faces of the Newton polygon correspond to \((v_i,s_i)\) and the vertices of the transition function correspond to \((v_i,u_i)\), and there is a bijective correspondence between these sequences. Therefore we can compute transition functions from Newton polygons and vice versa, provided we represent the transition function by its vertices. We introduce a new type to do so:
\begin{lstlisting}
type HasseHerbTransFunc
attributes HasseHerbTransFunc: vertices
\end{lstlisting}

It is easy to evaluate the transition function at a given \(v\) or its inverse at \(u\) by interpolating between the vertices. If we have the transition functions \(\phi_{L/K}\) and \(\phi_{M/L}\), then \(\phi_{M/K} = \phi_{L/K} \circ \phi_{M/L}\) has as its lower breaks the union of: (a) the lower breaks of \(\phi_{M/L}\); and (b) \(\phi_{L/K}^{-1}\) applied to the lower breaks of \(\phi_{L/K}\). The upper breaks are similar, and hence we have the vertices of \(\phi_{M/K}\) and therefore deduce a function to compose transition functions.

Now if we are given such an \(M/L/K\) say, with \(M/L\) and \(L/K\) each defined by an Eisenstein polynomial over an unramified extension, then we can compute the ramification polygons of \(M/L\) and \(L/K\) via the definition above. From this, we can compute the transition functions \(\phi_{M/L}\) and \(\phi_{L/K}\). From these and the composition routine described above, we can compute \(\phi_{M/K}\) and from this compute the ramification polygon of \(M/K\). In this manner, we deduce an intrinsic \verb|RamificationPolygon| to compute the ramification polygon of an arbitrary extension of \(p\)-adic fields and \verb|TransitionFunction| to compute the corresponding transition function.

\subsection{Hensel's lemma for univariate root-finding}
\label{xp-sec-hensel}

Recall Hensel's classic lemma.

\begin{lemma}[Hensel]
Suppose \(f(x) \in \cO[x]\), \(a \in \cO\) such that \(v(f(a)) \geq s > 0 = v(f'(a))\). Then there exists a unique \(b \in K\) such that \(f(b) = 0\) and \(v(a-b) \geq s\). More precisely, defining \(a' := a - f(a)/f'(a)\) then \(v(f(a')) \geq 2s\) and \(v(f'(a')) = 0\), so iterating \(a \mapsto a'\) then \(a \to b\).
\end{lemma}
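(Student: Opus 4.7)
The plan is a standard Newton-iteration proof: establish a one-step doubling of precision, iterate in the complete field $K$ to produce the root, and derive uniqueness from a short Taylor argument.

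The key computation is the Taylor expansion at $a \in \cO$. Since $f \in \cO[x]$, writing $f(a+x) = \sum_{i \geq 0} c_i x^i$ gives $c_i \in \cO$ for all $i$, with $c_0 = f(a)$ and $c_1 = f'(a)$. Setting $h = -f(a)/f'(a)$, the hypotheses give $v(h) = v(f(a)) - v(f'(a)) = v(f(a)) \geq s > 0$, so $h \in \cO$ and $a' = a + h \in \cO$. The cancellation $c_0 + c_1 h = 0$ leaves
\[
f(a') = \sum_{i \geq 2} c_i h^i,
\]
and each term has valuation $\geq i\,v(h) \geq 2s$, so $v(f(a')) \geq 2s$. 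An identical expansion $f'(a+x) = \sum_{i \geq 0} d_i x^i$ with $d_i \in \cO$ gives $f'(a') - f'(a) = \sum_{i \geq 1} d_i h^i$ with each summand of valuation $\geq v(h) > 0$; by the ultrametric inequality $v(f'(a')) = v(f'(a)) = 0$.

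Next I would iterate. Define $a_0 = a$ and $a_{n+1} = a_n - f(a_n)/f'(a_n)$. By induction $v(f(a_n)) \geq 2^n s$ and $v(f'(a_n)) = 0$, and $v(a_{n+1} - a_n) = v(f(a_n)) \geq 2^n s \to \infty$. Hence $(a_n)$ is Cauchy in the complete field $K$, converges to some $b \in \cO$ with $v(b - a) \geq s$, and continuity forces $f(b) = 0$. For uniqueness, suppose $b_1, b_2 \in K$ are two roots with $v(b_i - a) \geq s$ and put $\delta = b_2 - b_1$. The same one-step expansion argument applied at $b_1$ (which lies at distance $\geq s$ from $a$, hence satisfies $v(f'(b_1)) = 0$) yields $0 = f(b_2) = f'(b_1)\,\delta + \delta^2 \eta$ for some $\eta \in \cO$. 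If $\delta \neq 0$, cancelling one factor of $\delta$ gives $f'(b_1) = -\delta \eta$, whence $0 = v(f'(b_1)) \geq v(\delta) > 0$, a contradiction. So $b_1 = b_2$.

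The main obstacle is purely bookkeeping: confirming that the Taylor coefficients $c_i$ and $d_i$ obtained by expanding $f(x)$ and $f'(x)$ around $a \in \cO$ really lie in $\cO$ (they do, since both polynomials have integral coefficients and $a$ is integral, so no denominators are introduced) and that the limit $b$ remains in $\cO$. Everything after that is the routine Newton-convergence argument in a complete non-archimedean field.
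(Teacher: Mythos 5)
The paper states this as a classical result without proof, so there is no internal argument to compare against. Your proof is correct and is the standard Newton-iteration argument: the Taylor expansion at $a$ (with coefficients in $\cO$ because $f \in \cO[x]$ and $a \in \cO$) shows that $v(h) = v(f(a)) \geq s$, that the cancellation of the linear term yields $v(f(a')) \geq 2v(h) \geq 2s$, and that $v(f'(a')) = v(f'(a)) = 0$ by the ultrametric inequality; iterating gives a Cauchy sequence in the complete ring $\cO$ converging to a root $b$ with $v(a-b) \geq s$, and the uniqueness step — expanding at $b_1$, noting $v(f'(b_1)) = 0$, and extracting a factor of $\delta$ to reach the contradiction $0 = v(f'(b_1)) \geq v(\delta) > 0$ — is also sound. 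The only things worth making explicit, which you flag as bookkeeping, are that $b_1, b_2 \in \cO$ (they differ from $a \in \cO$ by something of positive valuation) so the Taylor coefficients at $b_1$ are integral, and that the limit $b$ lies in $\cO \subset K$; both are immediate.
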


We refer to the iteration process in Hensel's lemma as ``Hensel lifting''. It can be generalized to non-integral inputs:

\begin{lemma}
Suppose \(f(x) \in K[x]\), where \(K\) is a \(p\)-adic field, and \(a \in K\) such that among all roots \(b\) of \(f\), \(v(a-b)\) is maximised precisely once. Then iterating \(a \mapsto a - f(a)/f'(a)\) yields \(a \to b\).
\end{lemma}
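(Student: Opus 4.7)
The plan is to factor $f(x) = c\prod_i (x-b_i)$ over an algebraic closure of $K$, with $b=b_1$ the unique maximizer of $v(a-b_i)$, and then compute the Newton step explicitly in terms of the roots. First I would note that since $a \in K$, for any $\sigma$ in the absolute Galois group we have $v(a-\sigma b) = v(\sigma(a-b)) = v(a-b)$; by the uniqueness hypothesis $\sigma b = b$, so $b \in K$ and it makes sense to speak of the iterates converging to $b$.

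The key algebraic identity is the decomposition
\[ f'(a) \;=\; c\sum_j \prod_{i\neq j}(a - b_i) \;=\; c\prod_{i\neq 1}(a - b_i)\cdot(1 + \epsilon), \qquad \epsilon \;=\; \sum_{j\neq 1} \frac{a-b}{a-b_j}. \]
Setting $\delta := v(a-b) - \max_{j\neq 1} v(a-b_j) > 0$, each summand of $\epsilon$ has valuation at least $\delta$, so $v(\epsilon) \geq \delta > 0$ and $1+\epsilon$ is a unit. Substituting into the Newton step and using $f(a) = c(a-b)\prod_{i\neq 1}(a-b_i)$ gives
\[ a' - b \;=\; (a-b) - \frac{f(a)}{f'(a)} \;=\; (a-b)\cdot\frac{\epsilon}{1+\epsilon}, \]
hence $v(a'-b) = v(a-b) + v(\epsilon) \geq v(a-b) + \delta$.

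Next I would verify that the uniqueness hypothesis is preserved, and in fact that $\delta$ grows. For $i \neq 1$, write $a' - b_i = (a - b_i) - (a-b)/(1+\epsilon)$; the second term has valuation $v(a-b)$, which strictly exceeds $v(a-b_i)$, so by the ultrametric inequality $v(a' - b_i) = v(a - b_i)$. Therefore $b$ remains the unique maximizer, and the new gap satisfies
\[ \delta' \;=\; v(a'-b) - \max_{i\neq 1} v(a'-b_i) \;\geq\; \bigl(v(a-b) + \delta\bigr) - \max_{i\neq 1} v(a-b_i) \;=\; 2\delta. \]
Hence $\delta$ at least doubles at every iteration, so $v(a^{(n)} - b) \to \infty$ and $a^{(n)} \to b$ in $K$.

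The main obstacle is really just the bookkeeping needed for the decomposition of $f'(a)$: isolating the $j = 1$ term, verifying that $v(\epsilon) \geq \delta$, and then checking both halves of the ``doubling'' claim (that $v(a' - b)$ increases by $\delta$ and that $v(a' - b_i)$ is unchanged for $i \neq 1$). Once the identity $a' - b = (a-b)\epsilon/(1+\epsilon)$ is in hand, convergence and uniqueness of the limit are immediate consequences of the ultrametric inequality and the completeness of $K$.
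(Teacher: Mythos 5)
Your proof is correct, but it takes a genuinely different route from the paper's. The paper \emph{reduces} the general statement to the integral Hensel's lemma already stated above it: it observes that the hypothesis is equivalent to the first face of the Newton polygon of \(f(x+a)\) having width 1, picks integers \(j,k\) so that \(g(x):=\pi^j f(\pi^k x + a)\) is integral with \(\val(g_0)>0=\val(g_1)\), applies classical Hensel to \(g\) and \(0\), and transfers back by linearity. You instead argue \emph{directly from the roots}: you factor out the dominant term in \(f'(a)\) to get the identity \(a'-b = (a-b)\,\epsilon/(1+\epsilon)\), show that each \(v(a'-b_i)\) for \(i\neq 1\) is preserved by the ultrametric inequality, and conclude that the valuation gap \(\delta\) at least doubles at each step. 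Your argument is self-contained (it does not need the integral version as a black box), it gives an explicit quadratic convergence rate, and it also handles a detail the paper leaves implicit: that \(b\) actually lies in \(K\), which you deduce from uniqueness via the Galois action. The paper's reduction is shorter and makes clear why the hypothesis is exactly the width-one condition on the Newton polygon, which fits the surrounding machinery; your computation is more elementary and more informative about the dynamics of the iteration. Both are valid.
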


\begin{proof}
The generalization is actually reducible to the original version.

Consider the polynomial \(f(x+a)\). Its roots are \(b-a\) where \(b\) is a root of \(f\), and so its Newton polygon measures the number of times each \(v(a-b)\) occurs. Hence the hypothesis is equivalent to saying that the first face of the Newton polygon of \(f(x+a)\) has width 1.

Suppose this is true, then in particular the first face has integral slope and so there exist \(j,k \in \ZZ\) so that \(g(x) := \pi^j f(\pi^k x + a)\) has integral coefficients, \(\val(g_0) > 0\) and \(\val(g_1) = 0\). Note that \(g_0 = g(0)\) and \(g_1 = g'(0)\) so the original version of Hensel's lemma applies to \(g\) and \(0\). By linearity, Hensel lifting on \(g\) is equivalent to Hensel lifting on \(f\).
\end{proof}

\begin{remark}
Krasner's lemma is a corollary of this form of Hensel's lemma.
\end{remark}

We provide an intrinsic \verb|IsHenselLiftable| which takes as input a polynomial \(f(x) \in K[x]\) and an element \(a \in K\) and returns true if this generalized version of Hensel's lemma can be applied to find a root \(b\) of \(f\) close to \(a\). If so, it also returns that root.

The algorithm proceeds by computing \(f(x+a)\) to sufficient precision to see if the first face of its Newton polygon has width 1 or not. If so, then the returned root has as its initial approximation the approximation of \(a\) truncated to a certain precision determined by Hensel's lemma, and its update function performs the Hensel lifting iteration above.

\begin{lstlisting}
intrinsic IsHenselLiftable(
  f :: RngUPolElt_FldPadExact,
  a :: FldPadExactElt)
    -> BoolElt, FldPadExactElt

  // first determine if Hensel's lemma is applicable
  // try successively precise approximations
  for n in 1,2,... do
    // get an approximation of f and a
    xf := EpochApproximation(f, n)
    xa := EpochApproximation(a, n)
    // approximate f(x+a)
    xf2 := Evaluate(xf, x + xa)
    // this Newton polygon is computed from the *weak*
    // valuations, so is not necessarily correct
    np := NewtonPolygon(xf2)
    face := Faces(np)[1]
    // if the first face has width 1 and the right hand
    // vertex is correct, then there really is a face of
    // width 1
    if Width(face) eq 1
    and not IsWeaklyZero(Coefficient(xf, 1))
    then break
    // if the face has higher width, and both vertices
    // are correct, then there really is a face of this
    // width
    elif Width(face) ne 1
    and not IsWeaklyZero(Coefficient(xf, 0))
    and not IsWeaklyZero(Coefficient(xf, EndVertices(face)[2][1]))
    then return false
    // else we cannot conclude whether the first face
    // has width 1 or not
    else continue
  // if we get this far, then a is Hensel liftable
  // we omit the implementation of Hensel lifting
  root := ...
  return true, root
\end{lstlisting}

\subsection{Univariate root finding I}
\label{xp-sec-roots}

Magma provides an intrinsic \verb|Roots| to find all of the roots of a univariate polynomial over an inexact \(p\)-adic field. As discussed in \S\ref{xp-sec-compare}, perhaps confusingly these are roots ``up to precision'', so for example given the polynomial \(x^2 + 2^{10} \ZZ_2\) over \(\QQ_2\), it will return the root \(0 + 2^{10} \ZZ_2\) with multiplicity 2. In a sense this is misleading, because it could be that the polynomial is acutally \(x^2 + 2^{11}\) to absolute precision 10, and this polynomial does not have any roots. Hence, one should not interpret the existence of roots of an inexact polynomial to necessarily be roots of any lift of that polynomial to something more precise.

On the other hand, a \verb|Roots| intrinsic for exact polynomials should only return genuine roots of the full-precision polynomial. We can use the inexact \verb|Roots| intrinsic and \verb|IsHenselLiftable| to achieve the desired result:

\begin{lstlisting}
intrinsic Roots(f :: RngUPolElt_FldPadExact) -> []
  for n in 1,2,... do
    // get an approximation to f
    xf := EpochApproximation(f, n)
    // compute the roots of f up to precision
    xroots := Roots(xf)
    // check that the roots are all Hensel liftable
    roots := []
    for xroot in xroots do
      // the roots must be distinct, up to precision,
      // to have a chance of succeeding; if not, go
      // to the next precision in the strategy
      if Multiplicity(xroot) ne 1 then
        continue n
      // see if an approximation to the root is
      // Hensel liftable to a genuine root of f
      ok, root := IsHenselLiftable(f, xroot)
      // if not, then go to the next precision
      if not ok then
        continue n
      // if we get this far, we have a root
      Append(~roots, root)
    // if we get this far, we have a full set of roots
    return roots
  // if we get this far, we have run out of things to try
  error "precision error"
\end{lstlisting}

Note that this can only succeed if all of the roots over the base field are simple, because Hensel's lemma can only detect simple roots. This is the best possible: if \(f\) has a root \(r\) of multiplicity \(m\), then to any precision this is indistinguishable from \(f\) having an irreducible factor of degree \(m\), all of whose roots are very close to \(r\). For example, over \(\QQ_2\), the root \(1\) to multiplicity \(m\) is indistinguishable to high precision from an irreducible factor whose roots are \(1 + 2^{10000} \sqrt[m]{2}\). Hence it is not possible to prove that a polynomial to any finite precision has repeated roots.

\subsection{Hensel's lemma for multivariate root finding}
\label{xp-sec-hensel-multiroot}

We are now interested in solving square systems of multivariate polynomials, namely we wish to find the roots of systems of \(n\) polynomials \(f(x)=(f_1(x),\ldots,f_n(x)) \in K[x]^n\) in \(n\) variables \(x=(x_1,\ldots,x_n)\). A root of such a system is an element \(r \in K^n\) such that \(f(r)=0\).

The following multivariate version of Hensel's lemma is well-known:

\begin{lemma}
\label{xp-lem-hensel-multiroot}
Suppose \(f(x) \in \cO[x]^n\) is a system of \(n\) polynomials in \(n\) variables, \(a \in \cO^n\), \(\val(f(a)) \geq s > 2t = 2\val(\det J(f)(a))\) where \(J(f)_{i,j} = \tfrac{d f_i}{d x_j}\). Then there is a unique \(b \in K^n\) so that \(f(b)=0\) and \(\val(a-b) \geq s-t\). More precisely, defining \(a' = a - f(a) J(f)(a)^{-1}\), then \(\val(\det J(f)(a')) = t\) and \(v(f(a')) \geq 2(s-t)\); therefore iterating \(a \mapsto a'\) then \(a \to b\).
\end{lemma}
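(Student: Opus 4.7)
The plan is to verify the two ``more precisely'' claims for a single Newton step $a \mapsto a'$, iterate to produce $b$ as a Cauchy limit in $K^n$, and derive uniqueness from the non-degeneracy of the Jacobian. The ingredients are entirely parallel to the scalar case (already used in the paper's univariate Hensel lemma): a multivariate Taylor expansion with an integral remainder, plus a bound on $J(f)(a)^{-1}$ obtained by Cramer's rule.

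Write $J := J(f)(a)$ and use $J^{-1} = \det(J)^{-1}\,\mathrm{adj}(J)$. The entries of $\mathrm{adj}(J)$ are integer polynomials in the entries of $J \in \cO^{n \times n}$, so they have non-negative valuation; hence every entry of $J^{-1}$ has valuation $\geq -t$. Setting $h := a' - a$ therefore gives $\val(h) \geq s - t > t \geq 0$, so in particular $a' \in \cO^n$. Next, expand $f$ around $a$ as $f(a + h) = f(a) + J h + R(h)$, where each component of $R$ is a polynomial in the $h_i$ of total degree $\geq 2$ with coefficients in $\cO$, so $\val(R(h)) \geq 2 \val(h)$. By construction $Jh = -f(a)$, hence $f(a') = R(h)$ and $\val(f(a')) \geq 2(s - t)$. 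Similarly each entry of $J(f)(a') - J$ is an integer polynomial of total degree $\geq 1$ in the $h_i$, so $J(f)(a') \equiv J \pmod{\pi^{s-t}}$ entrywise, and this congruence passes to the determinant. Since $s - t > t = \val(\det J)$, we conclude $\val(\det J(f)(a')) = t$.

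Setting $s' := 2(s - t)$ and $t' := t$, the hypothesis $s' > 2 t'$ becomes $s > 2t$, so the iteration $a_k \mapsto a_{k+1}$ applies indefinitely. The gap $g_k := s_k - 2t$ satisfies $g_{k+1} = 2 g_k$, so $\val(a_{k+1} - a_k) \geq s_k - t \to \infty$ and $(a_k)$ is Cauchy; its limit $b \in K^n$ satisfies $f(b) = 0$ by continuity of $f$, and $\val(a - b) \geq s - t$ on passing to the limit. For uniqueness, suppose $b, c$ are both roots with $\val(a - b), \val(a - c) \geq s - t$. Then $\val(c - b) \geq s - t > t$, and the Jacobian stability argument above applied at $b$ gives $\val(\det J(f)(b)) = t$. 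Taylor at $b$ yields $0 = f(c) = J(f)(b)(c - b) + R'(c - b)$ with $\val(R'(c - b)) \geq 2 \val(c - b)$, so $(c - b) = -J(f)(b)^{-1} R'(c - b)$ forces $\val(c - b) \geq 2 \val(c - b) - t$, i.e.\ $\val(c - b) \leq t$, contradicting $\val(c - b) > t$ unless $c = b$.

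The main obstacle is nothing conceptually deep but rather the bookkeeping of valuations: one must be sure that Cramer's rule really gives the uniform bound $\val(J^{-1}) \geq -t$ on all entries; that the integral Taylor remainder genuinely satisfies $\val(R(h)) \geq 2\val(h)$ (which requires $a, h \in \cO^n$ so the higher-degree monomials in $h$ have integral coefficients); and that the precision gap $s - 2t$ truly doubles, rather than merely being preserved, under the iteration, since this is what guarantees quadratic convergence.
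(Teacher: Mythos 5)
The paper does not prove this lemma; it cites it as ``well-known'' and only proves the generalization (Lemma~\ref{xp-lem-hensel-multiroot-general}) by reduction to it. Your proof is correct and is the standard Newton-iteration argument: Cramer's rule gives the uniform bound $\val(J^{-1}_{ij}) \geq -t$, the integral Taylor remainder gives $\val(f(a')) \geq 2(s-t)$, the ultrametric inequality gives $\val(\det J(f)(a')) = t$, the doubling of the gap $s - 2t$ gives the Cauchy iteration, and uniqueness follows from the inverse-Jacobian bound applied at a root. All the valuation bookkeeping checks out (in particular you correctly use $a, h \in \cO^n$ to keep the remainder coefficients integral, and $s - t > t \geq 0$ to conclude $a' \in \cO^n$), so this is a complete proof that fills the gap the paper leaves open.
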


We can state a slightly more general version, which says that if we can apply a linear change to the equations, perhaps over an extension, such that Hensel's lemma applies, then Hensel's lemma also applies to the original system over the base field:

\begin{lemma}
\label{xp-lem-hensel-multiroot-general}
Suppose \(f(x) \in K[x]\) is a system of \(n\) polynomials in \(n\) variables, \(a \in K^n\), \(L/K\) a finite extension, \(M,N \in \GL_n(L)\), \(\tilde{a} := M a \in \cO_L^n\), \(\tilde{f} := N f(M^{-1} x) \in \cO_L[x]^n\), \(\val(\tilde{f}(\tilde{a})) \geq s > 2t = \val(\det J(\tilde{f})(\tilde{a}))\). Then \(a\) Hensel lifts to a unique root of \(f\) in \(K\).
\end{lemma}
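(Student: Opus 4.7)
The plan is to reduce everything to Lemma \ref{xp-lem-hensel-multiroot} applied to the transformed system $\tilde{f}$ and starting point $\tilde{a}$ over $L$. The hypotheses of that lemma are exactly what we assumed about $\tilde{f}$ and $\tilde{a}$, so we immediately get a unique $\tilde{b} \in L^n$ with $\tilde{f}(\tilde{b})=0$ and $\val(\tilde{a}-\tilde{b}) \geq s-t$, obtained as the limit of the Newton iteration $\tilde{a}_{k+1} = \tilde{a}_k - J(\tilde{f})(\tilde{a}_k)^{-1}\tilde{f}(\tilde{a}_k)$ with $\tilde{a}_0 = \tilde{a}$. Define the candidate root $b := M^{-1}\tilde{b} \in L^n$; then $f(b) = N^{-1}\tilde{f}(Mb) = N^{-1}\tilde{f}(\tilde{b}) = 0$, so $b$ is a root of $f$ in $L^n$.

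The key step is to show $b \in K^n$, and for this the idea is to track the Newton iteration through the change of coordinates. A direct chain-rule calculation using $\tilde{f}(x) = N f(M^{-1}x)$ gives $\tilde{f}(Mc) = N f(c)$ and $J(\tilde{f})(Mc) = N\, J(f)(c)\, M^{-1}$ for any $c$. Consequently, one Newton step for $\tilde f$ at $Mc$ equals
\[
Mc - \bigl(N J(f)(c) M^{-1}\bigr)^{-1} N f(c) \;=\; Mc - M\, J(f)(c)^{-1} f(c) \;=\; M\bigl(c - J(f)(c)^{-1} f(c)\bigr).
\]
Hence if we define the sequence $a_0 := a$ and $a_{k+1} := a_k - J(f)(a_k)^{-1} f(a_k)$ purely in terms of $f$, we obtain $\tilde{a}_k = M a_k$ for every $k$ by induction.

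Because $f$ has coefficients in $K$ and $a \in K^n$, each $a_k$ lies in $K^n$ (the inverse of $J(f)(a_k)$ exists over $K$ as soon as $\det J(f)(a_k)$ is nonzero, which follows inductively from the standard Hensel estimates applied to the $\tilde{a}_k$ side together with the observation that $\det J(f)(a_k)$ and $\det J(\tilde{f})(\tilde{a}_k)$ differ by the constant nonzero factor $\det N / \det M$). Since $M a_k = \tilde{a}_k \to \tilde{b}$ in $L^n$, the sequence $a_k = M^{-1}\tilde{a}_k$ converges to $b$ in $L^n$. But $(a_k)$ is a Cauchy sequence in the complete space $K^n$, so its limit $b$ must already lie in $K^n$.

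Uniqueness follows by the same correspondence: any other $K$-rational root $b'$ to which $a$ Hensel-lifts would, by the same chain-rule identity, produce a root $Mb'$ of $\tilde{f}$ obtained as the limit of the $\tilde{f}$-Newton iteration from $\tilde{a}$, which must equal $\tilde{b}$ by the uniqueness clause of Lemma \ref{xp-lem-hensel-multiroot}, forcing $b' = M^{-1}\tilde{b} = b$. The only conceptual subtlety is the chain-rule calculation showing that Newton iteration commutes with the affine changes by $M$ and $N$; once that is in hand, the rest is formal, so I do not expect any real obstacle.
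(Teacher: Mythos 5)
Your proof is correct and takes essentially the same route as the paper: both translate the Newton iteration for $\tilde f$ via the chain-rule identity $\tilde a' = M a'$ into a $K$-rational Newton iteration for $f$, then conclude $b \in K^n$ because the iterates stay in $K^n$. You are simply more explicit than the paper about the chain-rule computation, the nonvanishing of $\det J(f)(a_k)$, and the Cauchy/completeness step that justifies the limit lying in $K^n$.
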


\begin{proof}
Define \(\tilde{a} = M a\), \(\tilde{a}' = \tilde{a} - \tilde{f}(\tilde{a}) J(\tilde{f})(\tilde{a})^{-1}\). We know that iterating \(\tilde{a} \mapsto \tilde{a}'\) then \(\tilde{a} \to \tilde{b} \in L\) a root of \(\tilde{f}\). By linearity we find \(\tilde{a}' = M a'\) where \(a' = a - f(a) J(f)(a)^{-1}\). We conclude that \(a \to b\) such that \(M b = \tilde{b}\), and since \(a' \in K\), then \(b \in K\) also.
\end{proof}

In the package, we provide an intrinsic \verb|IsHenselLiftable| which takes as input such a system \(f\) and a near-root \(a\) and returns true if Hensel's lemma is applicable. If so, it also returns the Hensel-lifted root itself. It also optionally accepts two vectors \(\mu,\nu \in \QQ^n\) which define the diagonal matrices \(M\) and \(N\) with diagonal entries \(\pi^\mu\) and \(\pi^\nu\), and uses the more general version of Hensel's lemma. This allows us to implicitly rescale the equations and variables, so that the inputs need not be integral.

It should be possible to determine whether there exists any such \(\mu\) and \(\nu\) so that Hensel's lemma is applicable, and therefore recover a completely general and parameterless version of multivariate \verb|IsHenselLiftable| in analogue with the univariate case. The theory for this has not been completely worked out yet.

\begin{remark}
An algorithm to actually compute the roots or factors of such a square system is work in progress.
\end{remark}

\subsection{Hensel's lemma for univariate factorization}
\label{xp-sec-hensel-factorization}

Suppose \(f(x) \in K[x]\) is a monic univariate polynomial of degree \(n=n_1+n_2\). Consider the problem of finding a factorization \(f(x) = g(x)h(x)\) where \(\deg(g)=n_1\), \(\deg(h)=n_2\) and \(g\) and \(h\) are monic. By treating the \(n\) coefficients of \(1,x,\ldots,x^{n-1}\) in \(f(x)-g(x)h(x)\) as multivariate polynomials in the \(n_1\) coefficients of \(g\) and the \(n_2\) coefficients of \(h\), we have a system of \(n\) multivariate polynomials in \(n\) variables to solve.

We conclude that there is a version of Hensel's lemma applicable to this situation, provided that a given near-factorization \(f(x) \approx g(x)h(x)\) is sufficiently accurate. How accurate this needs to be is controlled by the determinant of the Jacobian matrix \(J\) in Hensel's lemma. In this case, the first \(n_1\) rows of \(J\) correspond to \(\tfrac{d(f-gh)}{dg_i}=x^i h(x)\), and the next \(n_2\) rows correspond to \(\tfrac{d(f-gh)}{dh_i}=x^i g(x)\), with the columns being the coefficients of these polynomials. This is precisely the matrix defining the resultant, and so we conclude that \(\det(J) = \Res(g, h)\).

For example, we get the following version of Hensel's lemma for factorization, although more general versions analogous to those in previous sections are also possible.

\begin{lemma}
Suppose \(f(x),g(x),h(x) \in \cO[x]\) are monic of degrees \(n=n_1+n_2,n_1,n_2\) such that \(\val(f-gh) \geq s > 2t = 2\Res(g,h)\). Then \(g,h\) Hensel-lift uniquely to a factorization of \(f\).
\end{lemma}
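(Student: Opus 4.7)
The plan is to execute exactly the reduction flagged in the paragraph preceding the lemma: view the factorization problem as a square multivariate root-finding problem on the non-leading coefficients of $g$ and $h$, and apply the multivariate Hensel's lemma (Lemma~\ref{xp-lem-hensel-multiroot}).

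First I would set up the system explicitly. Write $g(x) = x^{n_1} + \sum_{i=0}^{n_1-1} g_i x^i$ and $h(x) = x^{n_2} + \sum_{j=0}^{n_2-1} h_j x^j$, and treat the unknowns as the tuple $a = (g_0,\ldots,g_{n_1-1}, h_0,\ldots,h_{n_2-1}) \in \cO^n$. Because $f, g, h$ are monic of the correct degrees, the coefficient of $x^n$ in $f-gh$ vanishes identically, so $F(a) := f - gh$ contributes exactly $n$ equations $F_0(a),\ldots,F_{n-1}(a) \in \cO$. By hypothesis the given approximate factorization satisfies $\val(F(a)) \ge s$ at the point $a$ encoding $g,h$. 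Conversely, any root $b \in \cO^n$ of this system is (tautologically) the coefficient vector of monic polynomials $\tilde g, \tilde h$ of the prescribed degrees with $f = \tilde g \tilde h$ exactly.

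Next I would compute the Jacobian $J(F)$ at the given $a$. Differentiating $F_k = [x^k](f - gh)$ gives
\[
  \tfrac{\partial F_k}{\partial g_i} = -[x^k]\bigl(x^i h(x)\bigr) = -h_{k-i}, \qquad \tfrac{\partial F_k}{\partial h_j} = -[x^k]\bigl(x^j g(x)\bigr) = -g_{k-j},
\]
with the conventions $h_{n_2} = g_{n_1} = 1$ and $g_\bullet, h_\bullet = 0$ outside the valid range. Up to sign and a reordering of rows, the matrix with these entries is precisely the Sylvester matrix $\operatorname{Syl}(g,h)$, whose determinant is the classical resultant. Hence $\val(\det J(a)) = \val(\Res(g,h)) = t$.

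With these two ingredients the hypothesis $\val(F(a)) \ge s > 2t = 2\val(\det J(a))$ of Lemma~\ref{xp-lem-hensel-multiroot} holds, producing a unique $b \in \cO^n$ with $F(b) = 0$ and $\val(a-b) \ge s - t$; the components of $b$ are the coefficients of the desired monic factors $\tilde g, \tilde h$ lifting $g, h$, and uniqueness of the factorization follows from uniqueness of $b$. The only genuinely non-routine step is the identification of $\det J$ with $\Res(g,h)$, which is the main obstacle but is in fact the classical Sylvester-matrix computation; everything else is bookkeeping and an invocation of the previously established multivariate Hensel lemma.
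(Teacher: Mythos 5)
Your proof is correct and follows exactly the reduction the paper sketches in the paragraph preceding the lemma: cast the factorization as a square multivariate system in the non-leading coefficients of $g$ and $h$, recognize the Jacobian as the Sylvester matrix so that $\val(\det J) = \val(\Res(g,h)) = t$, and invoke Lemma~\ref{xp-lem-hensel-multiroot}. You have also implicitly corrected a typo in the stated hypothesis, reading $2t = 2\val(\Res(g,h))$ rather than the literal $2t = 2\Res(g,h)$, which is indeed what the multivariate lemma requires.
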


Suppose we are given \(f(x)\) and \(g(x)\) but not \(h(x)\) and want to determine if \(g(x)\) is Hensel liftable to a factor of \(f(x)\). It seems natural to define \(h := f \xdiv g\) and apply Hensel's lemma to this. The following lemma shows that this is indeed the best choice for \(h\):

\begin{lemma}
If \(f(x),g(x) \in K[x]\) have degrees \(n\) and \(n_1 \leq n\) and \(g\) is monic, then among polynomials \(h(x) \in K[x]\) of degree \(n_2=n-n_1\), \(\val(f-gh)\) is maximized by \(h = f \xdiv g\).
\end{lemma}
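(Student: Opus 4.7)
The plan is to reduce the claim to a statement about the remainder of $f$ on division by $g$, and then derive a contradiction by reducing modulo $\pi$. Since $g$ is monic of degree $n_1$, polynomial long division writes $f = gq + r$ with $q = f\xdiv g$ of degree $n_2$ and $r$ of degree strictly less than $n_1$. For any candidate $h$ of degree at most $n_2$ we get $f - gh = g(q-h) + r$, so setting $h' := q - h$ it suffices to prove that $\val(gh' + r) \le \val(r)$ for every $h' \in K[x]$ of degree at most $n_2$, with equality when $h' = 0$. The case $r = 0$ is vacuous (the right-hand side is $\infty$), so assume $v := \val(r)$ is finite and suppose for contradiction that some $h'$ satisfies $\val(gh' + r) > v$.

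After multiplying through by $\pi^{-v}$, the hypothesis becomes $g(\pi^{-v}h') + \pi^{-v}r \equiv 0 \pmod{\pi}$, while $\pi^{-v}r \in \cO[x] \setminus \pi\cO[x]$ by construction. To legitimise the mod-$\pi$ reduction we first locate the Gauss valuation of $h'$. Because $g$ is monic, $\val(g) = 0$, so Gauss's lemma gives $\val(gh') = \val(h')$; combining this with the ultrametric identity $gh' = (gh'+r) - r$ yields $\val(gh') \ge v$, whence $\pi^{-v}h' \in \cO[x]$.

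We may now reduce in $\FF[x]$ and obtain $\bar g \cdot \overline{\pi^{-v}h'} = -\overline{\pi^{-v}r}$. The left-hand side is a multiple of $\bar g$, which retains its leading term $x^{n_1}$ and is therefore monic of degree $n_1$. The right-hand side, on the other hand, is nonzero (because $\pi^{-v}r$ has Gauss valuation $0$) and has degree strictly less than $n_1$ (because $\deg r < n_1$). A nonzero polynomial of degree less than $n_1$ cannot be divisible by a monic polynomial of degree $n_1$, giving the desired contradiction.

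The main conceptual obstacle is identifying the right bookkeeping: once one sees that the problem is genuinely a statement about $r$ and that Gauss's lemma plus the ultrametric inequality is enough to clear denominators of $h'$, the reduction mod $\pi$ finishes everything via the degree mismatch between $\bar g$ and the reduction of $r$. No iteration or explicit coefficient manipulation is required.
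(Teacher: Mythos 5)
Your proof is correct, and it takes a genuinely different route from the paper's. Both arguments begin with the same reduction: writing $r := f \bmod g$ and $q := f \xdiv g$, the problem becomes showing $\val(gh' + r) \leq \val(r)$ for all $h' \in K[x]$. The paper then argues by descent on degree: given a hypothetical $h'$ (called $d$ there, with opposite sign) of minimal degree $m$ violating this, it inspects the single coefficient of $gh' + r$ in degree $m + n_1$, observes that this coefficient is the leading coefficient $d_m$ of $h'$ (because $g$ is monic and $\deg r < n_1 \leq m + n_1$), concludes $\val(d_m)$ is large, and subtracts off $d_m x^m$ to obtain a lower-degree violation, contradicting minimality. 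You instead clear denominators by $\pi^{-v}$ with $v = \val(r)$, use Gauss's lemma and the ultrametric inequality to certify that $\pi^{-v}h' \in \cO[x]$, and then reduce modulo $\pi$, where the contradiction is a single degree comparison in $\FF[x]$: the product $\bar{g}\cdot\overline{\pi^{-v}h'}$ is either zero or of degree at least $n_1$, yet it must equal the nonzero polynomial $-\overline{\pi^{-v}r}$ of degree less than $n_1$. Your version disposes of the whole perturbation in one shot and is more conceptual; the paper's is more elementary (no residue-field reduction) but pays for it with an inductive peeling of coefficients. They are really the same phenomenon --- multiplication by a monic $g$ of degree $n_1$ does not kill top coefficients --- packaged differently.

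One subtlety worth noting. Your step ``because $g$ is monic, $\val(g) = 0$'' is not a consequence of monicity alone: monicity only gives $\val(g) \leq 0$ for the Gauss valuation, and equality requires $g \in \cO[x]$. In fact the lemma is false without this integrality hypothesis; for example over $\QQ_p$ take $g = x + \pi^{-1}$ and $f = x$, so $f \xdiv g = 1$ gives remainder $-\pi^{-1}$ of valuation $-1$, while $h = \pi^2$ gives $f - gh = (1-\pi^2)x - \pi$ of valuation $0$. The paper's proof has the same silent assumption at the step deducing $\val(h_0 - gd') \geq B$, which uses $\val(g\,d_m x^m) = \val(g) + \val(d_m) \geq B$ and so needs $\val(g) \geq 0$. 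The lemma is applied in \S\ref{xp-sec-hensel-factorization} to polynomials in $\cO[x]$, so this is harmless in context, but strictly speaking both your proof and the paper's prove the lemma only under the additional hypothesis $g \in \cO[x]$.
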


\begin{proof}
By definition, \(f - g (f \xdiv g) = f \bmod g =: h_0\). Consider arbitrary \(h = f \xdiv g + d\), then \(f-gh = f - g (f \xdiv g) - g d = h_0 - g d\). Define \(B = \val(h_0) + 1\) and suppose there exists \(d\) so that \(\val(h_0)-gd \geq B\). In particular \(d \neq 0\). Fix \(d\) of smallest degree, and let \(m\) be this degree. Then the \((m+n_1)\)th coefficient of \(f-gh\) is \(-d_m\) and so \(\val(d_m) \geq B\). Define \(d' = d - d_m x^m\), then \(\val(h_0)-gd' \geq B\) and \(\deg d' < \deg d\), a contradiction.
\end{proof}

The package provides an intrinsic \verb|IsHenselLiftable| which takes as input two polynomials \(f\) and \(g\) and returns true if \(g\) is Hensel-liftable to a factor of \(f\). If so, it also returns the factor itself. In analogue with the multivariate version of \verb|IsHenselLiftable|, this intrinsic takes parameters which implicitly re-scale the polynomials and the variable \(x\) before applying Hensel's lemma.

\subsection{Univariate factorization by Newton polygon}

An easy application of Hensel's lemma for univariate factorizations is to factor a polynomial according to its Newton polygon.

Recall that the slopes of faces of the Newton polygon of a polynomial \(f(x)\) correspond to valuations of roots of \(f(x)\), with the width of the face corresponding to the number of roots with this valuation. If two roots of \(f(x)\) come from the same irreducible factor, then they are Galois conjugate and so have the same valuation; we conclude that each face of the Newton polygon corresponds to a factor of \(f\) whose degree is the width of the face.

In fact, we can prove this fact directly using a version of Hensel's lemma for factoring, seen in the previous section: it is not hard to see that with a suitable choice of rescaling on \(f\) and \(x\) that we may choose \(g\) so that Hensel's lemma is applicable. Specifically, we rescale so that the selected face of the Newton polygon of \(f\) becomes horizontal and incident with the x-axis, and take for \(g\) the polynomial formed from the coefficients of \(f\) corresponding to the face.

The package provides a routine \verb|NewtonPolygonFactorization| which takes as input a univariate polynomial \(f\) and returns its factorization according to its Newton polygon. It is implemented essentially by first computing the \verb|NewtonPolygon| of \(f\), and then for each face constructing a suitable \(g\) and calling \verb|IsHenselLiftable| to produce a factor.

\subsection{Univariate factorization into irreducibles I}
\label{xp-sec-factorization}

We also provide an intrinsic \verb|Factorization| which returns the full factorization of a polynomial \(f(x)\) into irreducible factors.

It is implemented in a very similar fashion to \verb|Roots| as described in \S\ref{xp-sec-roots}: it calls Magma's builtin \verb|Factorization| routine on an approximation to \(f(x)\), and then checks if each factor returned is Hensel liftable using \verb|IsHenselLiftable|.

\subsection{Univariate root finding and factorization into irreducibles II}
\label{xp-sec-factorization-new}

Our \verb|Roots| and \verb|Factorization| intrinsics actually have a parameter \verb|Alg| to select between two different algorithms. We have already described \verb|Alg:="Builtin"| (\S\ref{xp-sec-roots}, \S\ref{xp-sec-factorization}) which is a wrapper around the builtin intrinsics for inexact \(p\)-adics.

With the parameter \verb|Alg:="OM"|, which is now the default, we use our own implementation of an ``OM algorithm'' for computing ``Okutsu invariants'' of the input polynomial, which identifies its irreducible factors and some properties of the extensions they define. From these, we can use ``single factor lifting'' to generate arbitrarily precise approximations to the factors. The algorithm is essentially that described in \cite[Ch. VI]{SinclairTh}.

\begin{remark}
Although not usually presented as such, ``single factor lifting'' is nothing but Hensel's lemma in disguise. Recall in \ref{xp-sec-hensel-factorization} that we expressed factoring \(f(x)=g(x)h(x)\) as a multivariate system of equations whose coefficients are the \(n_1=\deg(g)\) coefficients of \(g\) and the \(n_2=\deg(h)\) coefficients of \(h\).

We can instead write \(g(x) = x^{n_1} + \sum_{i<n_1} g'_i X_{g,i}(x)\) and \(h(x) = x^{n_2} + \sum_{i<n_2} h'_i X_{h,i}(x)\) where \(X_{*,i}(x) \in K[x]\) are fixed monic polynomials of degree \(i\), and instead consider \(f(x)=g(x)h(x)\) as a system of equations in the variables \(g'_i\) and \(h'_i\). Essentially, we have chosen bases for the vector spaces of monic polynomials of degrees \(n_1\) and \(n_2\) different from the usual \(1,x,x^2,\ldots\). This is a linear change of variables of the sort considered in Lemma \ref{xp-lem-hensel-multiroot-general}.

The OM algorithm builds up such a basis for each factor, and the point in the algorithm at which an irreducible factor is identified is precisely the point at which Hensel's lemma, in terms of this basis, can be invoked.
\end{remark}

\begin{remark}
The same algorithm is also made available as an intrinsic \verb|ExactpAdics_Factorization| which can take an inexact \(p\)-adic polynomial. This can be used independently of the package.
\end{remark}

\bibliography{refs}

\begin{thebibliography}{12}
\providecommand{\natexlab}[1]{#1}
\providecommand{\url}[1]{\texttt{#1}}
\expandafter\ifx\csname urlstyle\endcsname\relax
  \providecommand{\doi}[1]{doi: #1}\else
  \providecommand{\doi}{doi: \begingroup \urlstyle{rm}\Url}\fi

\bibitem[Bosma et~al.(1997)Bosma, Cannon, and Playoust]{magma}
W.~Bosma, J.~Cannon, and C.~Playoust.
\newblock The {M}agma algebra system. {I}:~the user language, 1997.
\newblock \url{http://magma.maths.usyd.edu.au}.

\bibitem[{Caruso}(2017)]{caruso}
X.~{Caruso}.
\newblock {Computations with p-adic numbers}.
\newblock \emph{ArXiv e-prints}, Jan 2017.
\newblock \url{https://arxiv.org/abs/1701.06794}.

\bibitem[Dokchitser and Doris(2017)]{conductor}
T.~Dokchitser and C.~Doris.
\newblock 3-torsion and conductor of genus 2 curves.
\newblock \emph{ArXiv e-prints}, Jun 2017.
\newblock \url{https://arxiv.org/abs/1706.06162}.

\bibitem[Doris()]{galoisgroups}
C.~Doris.
\newblock Computing the {G}alois group of a polynomial over a p-adic field.
\newblock In preparation.

\bibitem[Greve and Pauli(2012)]{GP}
C.~Greve and S.~Pauli.
\newblock Ramification polygons, splitting fields and {G}alois groups of
  {E}isenstein polynomials.
\newblock \emph{Int. J. Number Theory}, 8\penalty0 (6):\penalty0 1401--1424,
  2012.

\bibitem[Hart et~al.()Hart, Johansson, and Pancratz]{flint}
W.~Hart, F.~Johansson, and S.~Pancratz.
\newblock {FLINT}: {F}ast {L}ibrary for {N}umber {T}heory.
\newblock \url{http://flintlib.org}.

\bibitem[Helou(1990)]{Helou}
C.~Helou.
\newblock Non-{G}alois ramification theory of local fields.
\newblock \emph{Algebra Berichte}, 64, 1990.

\bibitem[Pauli and Sinclair(2017)]{PS}
S.~Pauli and B.~Sinclair.
\newblock Enumerating extensions of \((\pi)\)-adic fields with given
  invariants.
\newblock \emph{Int. J. Number Theory}, 13\penalty0 (8), 2017.

\bibitem[Serre(1979)]{SerLF}
J.-P. Serre.
\newblock \emph{Local Fields}.
\newblock Springer-Verlag, 1979.

\bibitem[Sinclair(2015)]{SinclairTh}
B.~Sinclair.
\newblock \emph{Algorithms for enumerating invariants and extensions of local
  fields}.
\newblock PhD thesis, University of North Carolina at Greensboro, 2015.

\bibitem[{The Sage Developers}()]{sage}
{The Sage Developers}.
\newblock {S}agemath, the {S}age {M}athematics {S}oftware {S}ystem.
\newblock \url{http://www.sagemath.org}.

\bibitem[van~der Hoeven et~al.()van~der Hoeven, Lecerf, and
  Mourrain]{mathemagix}
J.~van~der Hoeven, G.~Lecerf, and B.~Mourrain.
\newblock The {M}athemagix computer algebra and analysis system.
\newblock \url{http://www.mathemagix.org}.

\end{thebibliography}

\end{document}